\documentclass[11pt]{elsarticle}
\pagestyle{plain}

\usepackage{subfig}
\usepackage{amsfonts}
\usepackage{amssymb}
\usepackage{graphicx,color}
\usepackage{epstopdf}
\usepackage{epsfig}
\usepackage[usenames,dvipsnames]{xcolor}
\usepackage{amsmath}
\usepackage{amsthm}
\textheight222mm
\textwidth144mm
\topmargin-12mm
\oddsidemargin7.5mm
\parskip4pt plus2pt minus2pt

\newcommand{\be}{\begin{equation}}
	\newcommand{\ee}{\end{equation}}

\newtheorem{thm}{Theorem}

\newtheorem{cor}[thm]{Corollary}
\newtheorem{prop}[thm]{Proposition}

\theoremstyle{remark}

\theoremstyle{definition}

\bibliographystyle{plain}
\captionsetup[subfigure]{labelformat=empty}
\begin{document}
	
	\begin{frontmatter}
		
		\author[FortWayne]{Peter D. Dragnev \fnref{Dragnev}}
		\fntext[Dragnev]{The research of this author was supported in part by NSF grant DMS-1936543.}
		\ead{dragnevp@pfw.edu}
		\author[FortWayne]{Alan R. Legg \corref{Legg}}
		\cortext[Legg]{Corresponding Author}
		\ead{leggar01@pfw.edu}
		\author[Vanderbilt]{Edward B. Saff}
		\ead{edward.b.saff@vanderbilt.edu}
		\address[FortWayne]{Purdue University Fort Wayne, Fort Wayne, IN 46805 (USA)}
		\address[Vanderbilt]{Vanderbilt University, Nashville, TN 37240 (USA)}
		\date{\today}
		\title{Point Source Equilibrium Problems with Connections to Weighted Quadrature Domains}
		\begin{abstract}
				We explore the connection between supports of equilibrium measures and quadrature identities, especially in the case of point sources added to the external field $Q(z)=|z|^{2p}$ with $p \in \mathbb{N}$.  Along the way, we describe some quadrature domains with respect to weighted area measure $|z|^{2p}dA_z$ and complex boundary measure $|z|^{-2p}dz$.
		\end{abstract}
	\end{frontmatter}

	\section{Introduction}
	The purpose of this article is to analyze the connections between the supports of equilibrium measures in the plane $\mathbb{C}$, and domains that exhibit quadrature identities for certain weighted area and complex boundary measures. We will review the notions of equilibrium measure and quadrature domain and then discuss how they relate in general. This will serve to motivate a search for domains that exhibit particular weighted quadrature identities.The result will be a description of the supports of equilibrium measures in the context of external fields with additional point sources.  For a discussion of several varieties of quadrature domains, see \cite{KhavinsonLundberg}.
	
	\subsection{External Fields and Equilibrium Measures} 
	Consider a unit charge placed onto the complex plane that is free to distribute into the configuration of least logarithmic energy under the influence of an external field. The external field is given as an extended real-valued function and the unit charge distribution placed into the plane is conceived as a probability measure.
	
	To be precise, suppose an external field $Q(z): \mathbb{C} \to \mathbb{R} \cup \{\infty\}$ is given which is {\it admissible}. This means $\exp(-Q(z))$ is upper-semicontinuous, positive-valued on a set of positive logarithmic capacity, and satisfies $|z|\exp(-Q(z))\to 0$ as $|z| \to \infty$ (see \cite{ST}). For any probability measure $\mu$ supported in the plane, the {\it weighted logarithmic energy} of $\mu$ is 
	\[\int_\mathbb{C} \int_\mathbb{C} \ln \frac{1}{|w-z|}d\mu(w)d\mu(z)+2 \int_\mathbb{C} Q(w)d\mu(w). \]

	In the presence of such an admissible external field, there is a unique energy-minimizing probability measure $\mu_Q$ called the {\it equilibrium measure} of the system. It can be described in terms of its logarithmic potential by the Frostman Theorem (see \cite{ST} Theorem I.1.1.3). A consequence is that the equilibrium measure has constant {\it weighted potential} on its support (quasi-everywhere\footnote{A property holds {\it quasi-everywhere} if the set of points where it does not hold has capacity zero. A set has capacity zero when $\int \int \ln |x-y|^{-1}d\mu(x)d\mu(y)=+\infty$ for every probability measure $\mu$ supported on the set.}). The physical interpretation is that a potential difference would induce a current which would redistribute the charge.  Mathematically, for some constant $C_Q$, and letting $S_Q=\text{supp} (\mu_Q)$ be the support of the equilibrium measure, the following holds for $z \in S_Q$ (quasi-everywhere):
	\[\int_\mathbb{C} \ln \frac{1}{|w-z|}d\mu_Q(w)+Q(z)=C_Q. \] 
	The weighted potential is also at least $C_Q$ outside $S_Q$. These conditions on the weighted potential in fact characterize the equilibrium measure.
	
	We are especially interested in the case of a smooth subharmonic admissible external field $Q(z)$ to which additional point sources are added, say at $z_1, z_2, \dots, z_n \, \in \mathbb{C}$ of positive intensities $q_1, q_2, \dots, q_n$, respectively. Setting $q:=\sum_{j=1}^nq_j,$ we consider the new external field \[V(z):=(1+q)Q(z)+\sum_{j=1}^nq_j\ln|z-z_j|^{-1}.\]
	With $S_Q=\text{supp}(\mu_Q)$ and $S_V=\text{supp}(\mu_V)$, we would like to characterize the equilibrium support $S_V$ and see how it compares to $S_Q$. 
	
	Adding the point sources causes an outward flux that tends to displace the support of the equilibrium measure. The factor $(1+q)$ in $V(z)$ is chosen to compensate for this with a greater inward influence from infinity.  (We will see that this choice is natural insofar as $S_V$ will take the simple form $S_Q \backslash \Omega$ for an open set $\Omega$, when the point sources have low intensity and are placed inside $S_Q$.) Since {\it quadrature domains} will play an important role in our approach to these problems, we briefly review their essential features.

	\subsection{Quadrature Domains} A classical quadrature domain $\Omega$ for a given test class of functions is a domain in $\mathbb{C}$ such that for all functions $f$ in the test class, integration over the domain is equal to a linear combination of point evaluations of the function and its derivatives. In other words the following holds: 
	\begin{equation}
		\label{QI}
		\int_{\Omega}f(z)dA_z=\sum_{j=1}^N\sum_{k=0}^{n_j} c_{jk} f^{(k)}(z_j), \end{equation}
	where  $dA_z$ is Lebesgue area measure, the $z_j$ are points in $\Omega$ and the $c_{jk}$ are constants that do not depend on $f$. When the variable of integration is understood we may drop the subscript from the area measure and write $dA$. Formula (\ref{QI}) is known as a {\it quadrature identity}. Common choices of test class include the space $\mathcal{A}^1(\Omega)$ of integrable analytic functions, the space of integrable analytic functions with primitive, harmonic functions, the Hardy space $\mathcal{H}^2(\Omega)$, and the Bergman space $\mathcal{A}^2(\Omega)$ of square-integrable analytic functions. For more on the foundations of quadrature domains, see \cite{AS, Avci, Bell4, BellBook, Gustafsson, GS, Shapiro2, Proc} Here and throughout this article, `analytic' will be taken to mean complex analytic.
	
	Quadrature domains whose quadrature identity involves just a multiple of a single point evaluation have been dubbed `one-point' quadrature domains (see, for example, \cite{Bell5,HJ}). Usually the term implies that the point evaluation involves a function value, and not a derivative value. We will keep that convention here.
	
	In this article we admit the following generalization: integration on the left side of the quadrature identity will occur with respect to modified measures. We will especially be concerned with the complex measure $|z|^{-2p}dz$ along the boundary of the domain and with the positively weighted area measure $|z|^{2p}dA_z$, $p\in \mathbb{N}$. For simplicity our domains will always be $\mathcal{C}^\infty$ smooth and bounded. The default test class for quadrature identities shall be $\mathcal{A}^{\infty}(\Omega),$ the space of analytic functions on $\Omega$ that are $\mathcal{C}^\infty$ smooth up to the boundary. By density this test class will allow us to appeal to the Hardy and Bergman spaces when needed (e.g. \cite{BellBook} first paragraph of Chapter 4, Theorem 6.2, and Cor 15.1).
	
	The assumption of smoothness (and sometimes simple-connectedness) is made to keep the focus on an elegant connection between equilibrium and quadrature. Classical quadrature domains are always algebraic and admit only certain boundary singularities \cite{sakai, Gustafsson,AS}. On the other hand, general questions of `regularity' for equilibrium problems are nontrivial \cite{HM}. In Theorem \ref{cavity}, we see an example of a conclusion obtained under assumptions of smoothness that can be verified directly from Frostman's Theorem.
	
	\subsection{Outline}
	In Section \ref{eqandqd} we show the relationship between supports of equilibrium measures and quadrature identities in the setting of a smooth and subharmonic admissible external field with additional point sources.
	
	In Section \ref{modified} we identify one-point quadrature domains with respect to the measures we mentioned above.  This will occur in three stages. We first consider domains that exclude the origin. Then we examine domains containing the origin, considering separately those where the origin is and is not the quadrature node.
	
	In the final Section \ref{conclude} we apply our results to the case of the admissible external field $Q(z)=|z|^{2p}$, $p\in \mathbb{N}$, in the plane with additional point sources. We include a connection to quadrature properties of {\it Cassini ovals} pointed out in Exercise 22.3a of \cite{KhavinsonLundberg}. We also examine more generally the case of what will be termed a {\it cavity}.
	
	\section{Equilibrium and Quadrature Identities}
	\label{eqandqd}
	
	Suppose $Q(z)$ is a smooth subharmonic admissible external field in the complex plane. The equilibrium measure $\mu_Q$ in the plane in the presence of $Q(z)$ will have compact support $S_Q$ (see e.g. \cite{ST}, \cite{Ransford}, \cite{HM}).  On the interior of $S_Q$ the equilibrium measure has the form $(2 \pi)^{-1}\Delta Q dA$. 
	
	To proceed we assume $S_Q$ is the closure of a smooth bounded domain, and that $\mu_Q=(2 \pi)^{-1}\Delta Q dA\big{|}_{S_Q}$ (without singular part on the boundary).  Under these assumptions we can describe $S_Q$ in terms of a conformal mapping of its exterior. (For a detailed discussion of smoothness in equilibrium problems, see \cite{HM}.)
	
	Let $S_Q^c := \hat{\mathbb{C}} \backslash S_Q$ be the exterior of $S_Q$, and let $\alpha$ be an interior point of $S_Q$. Frostman's Theorem says that for $z \in S_Q$, 
	\[\frac{1}{2\pi}\int_{S_Q} \Delta Q(w) \ln \frac{1}{|w-z|}dA_w+Q(z)=C \]
	for some constant $C$. Indicating holomorphic differentiation by $\partial$ and antiholomorphic differentiation by $\bar{\partial}$, recall that $\Delta Q= 4 \partial \bar{\partial} Q$. So in the equality above, we differentiate in $z$ to obtain
	\[\frac{1}{\pi} \int_{S_Q} \frac{\partial \bar{\partial}Q(w)}{w-z}dA_w + \partial Q (z)=0, \quad \quad z \in \text{int} \, (S_Q).\]
	By the Cauchy-Green Formula (e.g. \cite{BellBook} Theorem 2.1), this means
	\[\frac{1}{2 \pi i} \int_{\partial S_Q} \frac{\partial Q(w)}{w-z}dw=0. \]
	Now change variables conformally via $t := \frac{1}{w-\alpha}$, $\zeta := \frac{1}{z-\alpha}$, and let $S_Q^*$ and $(S_Q^c)^*$ be the images of $S_Q$ and $S_Q^c$ respectively under this change:
	\[-\frac{1}{2 \pi i} \int_{\partial S_Q^*}\frac{ \partial Q(t^{-1}+\alpha)t \zeta}{\zeta-t} \frac{1}{t^2} dt=0. \]
	Since $S_Q$ and $S_Q^c$ have the same boundary but with opposite orientations, 
	we rewrite this as
	\[\int_{\partial (S_Q^c)^*} \frac{t^{-1} \partial Q(t^{-1}+\alpha)}{t-\zeta} dt=0, \quad \zeta \in \text{int}(S_Q^*) \]
	By Mergelyan's theorem (e.g. \cite{Rudin} Chapter 20 ), we can pass from this equality by uniform convergence to 
	\[\int_{\partial (S_Q^c)^*} t^{-1} \partial Q (t^{-1}+\alpha)h(t)dt=0 \]
	for all $h \in \mathcal{A}^\infty((S_Q^c)^*)$. 
	
	In the language of quadrature domains, we can summarize the above discussion as follows: mapping the exterior of $S_Q$ via the mapping $z \to t:=(z-\alpha)^{-1}$, the resulting $(S_Q^c)^*$ is a null quadrature domain with respect to the complex boundary measure $t^{-1}\partial Q(t^{-1}+\alpha)dt$. 
	
	The above can also be applied to certain cases when the external field $Q(z)$ is perturbed by point sources. Consider the external field $V(z)=(1+q)Q(z)+\sum_{j=1}^n q_j\ln|z-z_j|^{-1}$, where point sources are added at locations $z_j \in \mathbb{C}$ with intensities $q_j>0$. Assume that the equilibrium measure support $S_V$ in the plane in the presence of $V(z)$ is the closure of a smooth bounded simply connected domain. Following the reasoning above for $V(z)$ in place of $Q(z)$, the $\ln|z-z_j|^{-1}$ terms will differentiate into Cauchy kernels. These will produce point evaluations when integrated. Thus $(S_V^c)^*$ in this case will be a quadrature domain with respect to $t^{-1}\partial Q(t^{-1}+\alpha)dt$, with quadrature nodes at the points $(z_j-\alpha)^{-1}$. For illustration, a diagram of the mapping used above is found in Figure \ref{fig1} for the case of a single point source added to a field $Q(z)=|z|^{2p}$.
	
	\begin{figure}%
		\centering
		\subfloat[\centering Shaded: $S_V$]{{\includegraphics[width=3.5cm]{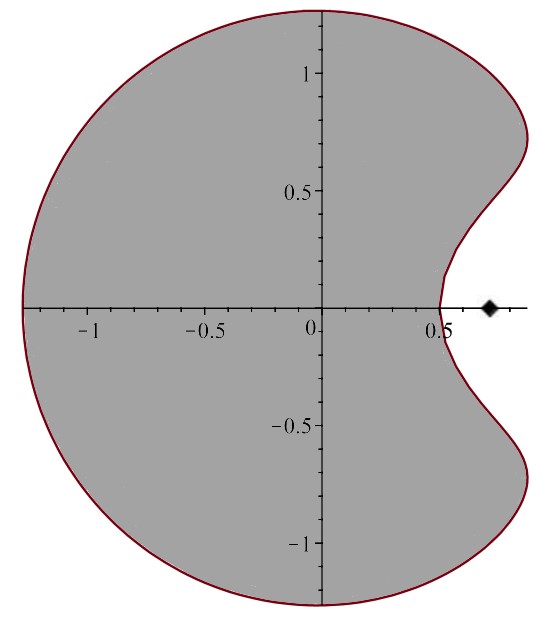} }}%
		\qquad
		\subfloat[\centering Unshaded: $(S_V^c)^*$]{{\includegraphics[width=5cm]{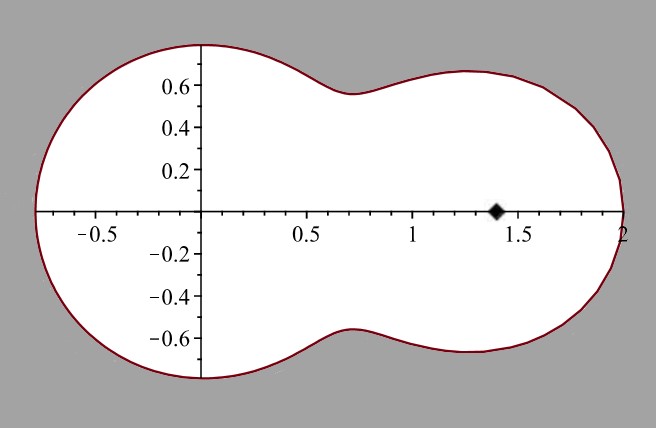} }}%
		\caption{ \\ {\bf Left:} Diagram of $S_V$ for $V(z)=C|z|^{2p}+q\ln|z-z_0|^{-1}$ when $S_V$ is simply connected. \\
			{\bf Right:} Conformally mapped exterior of support.}%
		\label{fig1}%
	\end{figure}
	
	
	
	
	On the other hand, when point sources of sufficiently small intensity and are placed into the interior of an existing equilibrium support, we expect by partial balayage \cite{GR, Roos} that the new equilibrium support will exclude neighborhoods of the point charges. (In other words, the point sources `sweep clean' a region of charge in their vicinity.) We will call these swept-clean voids in the support {\it cavities} (see Figure \ref{fig2}).  For examples see \cite{LD} where the cavities corresponding to point sources on the sphere are proven to come from quadrature domains; or \cite{BH}, where for external field $|z|^2$ the cavities are explained to be quadrature domains for Lebesgue area measure. On the sphere, quadrature domains have also appeared in \cite{CK1, CK2, CC} in the context of vector equilibrium problems and of fluid dynamics.  The effect of monomial terms added to a background external field was studied in \cite{KL}.
	
	\begin{figure}[h]
		\begin{center}
		\includegraphics[scale=0.4]{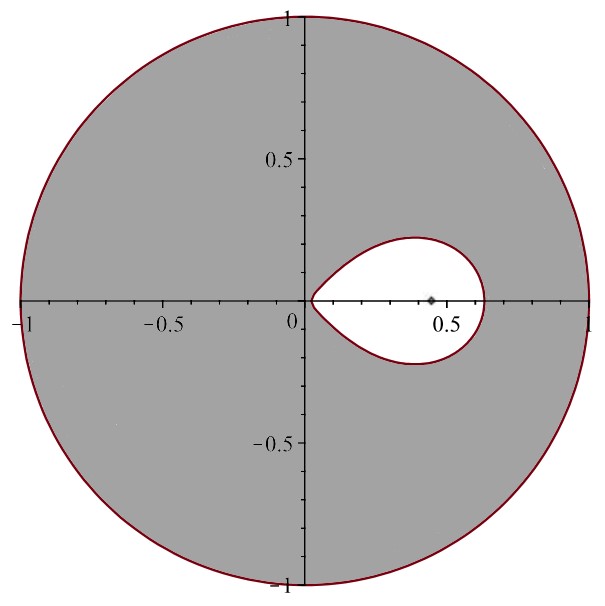}
		\end{center}
		\caption{\\ A cavity in $S_V$ for $V(z)=\frac{1+q}{4}|z|^4+q\ln|z-z_0|^{-1}$,
			\\ $q=.1995, z_0=\sqrt{0.2}$. }
		\label{fig2}
	\end{figure}
	
	Given the admissible external field $Q(z)$, let again $S_Q=\text{supp}(\mu_Q)$ be the support of the equilibrium measure in the plane in the presence of $Q(z)$.  Then add point sources of intensity $q_j>0$, $j=1,2,\cdots, n$ at points $z_1, z_2, \cdots, z_n \in \mathbb{C}$ respectively, and set $q=\sum_{j=1}^n q_j$. Consider the external field $V(z)=(1+q)Q(z)+\sum_{j=1}^nq_j \ln|z-z_j|^{-1}.$ If the $z_j$ are interior points of $S_Q$ and if the $q_j$ are sufficiently small, then by balayage the support $S_V$ of the equilibrium measure in the plane in the presence of $V(z)$ will be $S_Q \backslash \Omega$, for $\Omega$ an open set. In fact $\frac{1+q}{2 \pi}\int_\Omega \Delta Q(z) dA_z=q$ and $\frac{1+q}{2\pi}\int_{S_Q} \Delta Q(z) dA_z=(1+q)$.  In other words $S_V$ is formed by spreading a total charge of $(1+q)$ over the set $S_Q$, then deleting an open set amounting to a charge of $q$. In this setting we will call the components of $\Omega$ {\it cavities}. Recall that the factor $(1+q)$ in $V(z)$ is introduced so that the cavities are taken from $S_Q$. (Otherwise the point sources would tend to expand the outer boundary of the support in addition to forming cavities).
	
	In this situation, we can prove that $\Omega$ (the union of all cavities) is a `quadrature open set,' namely an open set that has a quadrature identity but which is not necessarily connected.
	
	By Frostman's condition for equilibrium measure $\mu_Q$,
	\[\frac{1+q}{2\pi} \int_{S_Q} \Delta Q(w)\ln \frac{1}{|w-z|}dA_w+(1+q)Q(z)=(1+q)C_Q \]
	for a constant $C_Q$, valid for $z \in S_Q$.
	Likewise Frostman's theorem applied to the equilibrium measure $\mu_V$ in the presence of $V(z):=(1+q)Q(z)+\sum_{j=1}^n q_j \ln|z-z_j|^{-1}$ yields
	\[\frac{1+q}{2\pi} \int_{S_V} \Delta Q(w) \ln \frac{1}{|w-z|}dA_w+(1+q)Q(z)+\sum_{j=1}^nq_j \ln \frac{1}{|z-z_j|}=C_V \]
	for a constant $C_V$, valid for $z \in S_V$.
	
	Differentiate these equalities with respect to $z$ and subtract one from the other. We see for $z \in S_V$ that
	\[\frac{1+q}{\pi}\int_\Omega \frac{\partial \bar{\partial} Q(w)}{w-z}dA_w-\sum_{j=1}^n\frac{q_j}{z_j-z}=0.\]
	In other words
	\[\int_\Omega \frac{\partial \bar{\partial} Q(w)}{w-z}dA_w=\frac{\pi}{1+q}\sum_{j=1}^n \frac{q_j}{z_j-z}. \] 
	
	Using an approximation theorem of Bers \cite{Bers}, we see by taking limits that
	\[\int_\Omega \Delta Q(w) h(w)dA_w=\frac{4\pi}{1+q}\sum_{j=1}^n q_j h(z_j), \]
	for all $h\in \mathcal{A}^1(\Omega)$, where $\mathcal{A}^1(\Omega)$ is the space of integrable holomorphic functions on $\Omega$. We conclude that $\Omega$ is a quadrature open set for weighted area measure $\Delta Q dA$.
	
	But $Q(z)$ is subharmonic, and the interior of $S_Q$ is in the region where $\Delta Q >0$. Thus if $\Omega$ had a component $\Omega_0$ that did not contain any of the $z_j$, then with test function $h(z)=1$ on that component and $h(z)=0$ on all others, we would have $\int_{\Omega_0} \Delta Q(z)dA_z=0$, which contradicts the positivity of $\Delta Q$. 
	
	So each component of $\Omega$ contains at least one of the $z_j,j=1,2,\ldots, n$ and is in fact a quadrature domain using weighted measure $\Delta Q dA$, with quadrature nodes at locations of point charges. We also note that in this case there was no assumption of boundary regularity, though implicitly we have assumed that $\partial S_Q$ has Lebesgue measure $0$, which is true for smooth enough $Q(z)$ \cite{HM}.
	
	We summarize this as our first theorem.
	
	\begin{thm} 
		\label{quadid}
		Let $Q(z)$ be a smooth subharmonic admissible external field in the plane. Let $V(z):=(1+q)Q(z)+\sum_{j=1}^n q_j\ln|z-z_j|^{-1}$ and set $q=\sum_{j=1}^nq_j$. Let $S_Q$ be the support of the equilibrium measure $\mu_Q$ in the plane, and $S_V$ the support of the equilibrium measure $\mu_V$. We assume that $\mu_Q$ and $\mu_V$ do not have singular parts supported on the boundaries of $S_Q$ and $S_V$, respectively.
		
		{\bf (i)} If $z_1, z_2, \dots, z_n$ are interior points of $S_Q$ and if $q_1, q_2, \dots, q_n$ are small positive intensities, so that $S_V=S_Q \backslash \Omega$ for an open set $\Omega \subset S_Q$, then each component $\mathcal{O}$ of $\Omega$ is a quadrature domain for weighted area measure $\Delta Q dA$. The quadrature nodes are all the points among the $z_1, z_2, \dots, z_n$ that lie in $\mathcal{O}$.
		
		{\bf (ii)} Suppose $S_V$ is smooth and simply connected with $z_1, z_2, \dots, z_n$ in the exterior of $S_V$. Let $\alpha$ be an interior point of $S_V$.  Let $S_V^c:=\hat{\mathbb{C}} \backslash S_V$ be the exterior of $S_V$, and let $(S_V^c)^*$ be the image of $S_V^c$ under the conformal mapping $z \mapsto t:=(z-\alpha)^{-1}$.  Then $(S_V^c)^*$ is a quadrature domain with respect to boundary measure $t^{-1} \partial Q(t^{-1}+\alpha)dt$. The quadrature nodes are the points $(z_j-\alpha)^{-1}$, $j=1,2,\dots, n$.
	\end{thm}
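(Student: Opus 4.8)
The plan is to treat the two parts separately, in each case turning the formal computations already displayed in the preceding discussion into a rigorous argument by supplying the relevant density theorems and verifying the bookkeeping of the point-source terms.

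\textbf{Part (i).} I would start from the two Frostman identities, for $\mu_Q$ and for $\mu_V$. The key structural observation is that $S_V=S_Q\setminus\Omega$ and that, by the standing hypothesis of no singular boundary part, both equilibrium measures equal $\tfrac{1+q}{2\pi}\Delta Q\,dA$ on their supports; hence the two weighted potentials differ only in the integral over $\Omega$ and in the point-source terms. Differentiating each identity in $z$ (using $\Delta Q=4\partial\bar\partial Q$ and $\partial_z\ln|w-z|^{-1}=\tfrac{1}{2(w-z)}$) and subtracting cancels the common $(1+q)\partial Q(z)$, leaving for $z\in S_V$ the identity
\[\int_\Omega\frac{\partial\bar\partial Q(w)}{w-z}\,dA_w=\frac{\pi}{1+q}\sum_{j=1}^n\frac{q_j}{z_j-z}.\]
I would then invoke Bers' approximation theorem to upgrade this Cauchy-transform identity to $\int_\Omega\Delta Q\,h\,dA=\tfrac{4\pi}{1+q}\sum_j q_j h(z_j)$ for all $h\in\mathcal{A}^1(\Omega)$, establishing that $\Omega$ is a quadrature open set. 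The connectivity claim then follows by testing with $h$ equal to the indicator of a single component: if a component $\mathcal{O}$ contained no $z_j$, the right side would vanish while the left equals $\int_{\mathcal{O}}\Delta Q\,dA>0$ by subharmonicity, a contradiction. Thus every component carries at least one node and is itself a quadrature domain for $\Delta Q\,dA$.

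\textbf{Part (ii).} Here I would follow the conformal-mapping derivation given earlier for $Q$ alone, now retaining the point-source contributions. Beginning from Frostman's identity for $\mu_V$ on the simply connected $S_V$ and differentiating in $z$, the smooth part yields $\tfrac{1+q}{\pi}\int_{S_V}\tfrac{\partial\bar\partial Q(w)}{w-z}\,dA_w$, while each term $q_j\ln|z-z_j|^{-1}$ differentiates into a Cauchy kernel supported at $z_j$. Applying the Cauchy-Green formula converts the area integral into the boundary integral $\tfrac{1}{2\pi i}\int_{\partial S_V}\tfrac{\partial Q(w)}{w-z}\,dw$, and the substitution $t:=(w-\alpha)^{-1}$, $\zeta:=(z-\alpha)^{-1}$ maps $\partial S_V$ to $\partial S_V^*$ and carries the point sources, which lie in $S_V^c$, to the prospective nodes $(z_j-\alpha)^{-1}\in(S_V^c)^*$. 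Reversing orientation to integrate over $\partial(S_V^c)^*$ and passing to the limit via Mergelyan's theorem yields a quadrature identity of the form $\int_{\partial(S_V^c)^*}t^{-1}\partial Q(t^{-1}+\alpha)h(t)\,dt=\sum_j c_j\,h\big((z_j-\alpha)^{-1}\big)$ valid for all $h\in\mathcal{A}^\infty((S_V^c)^*)$.

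The main obstacle in both parts is the correct handling of the point-source terms. In (i) this is the comparatively benign tracking of residues together with the Bers density step. In (ii) the delicate point is to verify that, after the Möbius change of variables and its attendant Jacobian factor $t^{-2}$, the Cauchy kernels $\tfrac{q_j}{z_j-z}$ transform into genuine point evaluations at $(z_j-\alpha)^{-1}$ rather than into spurious singularities on $\partial(S_V^c)^*$ or at the origin. Concretely, I would carry out the residue computation for the transformed kernel to pin down both the location of each node and the constant $c_j$, and I would confirm that each node lands in the interior of $(S_V^c)^*$ so that the evaluation is legitimate. Justifying the two density passages (Bers for $\mathcal{A}^1$, Mergelyan for $\mathcal{A}^\infty$) and recording the standing assumption that $\partial S_Q$ has Lebesgue measure zero would complete the argument.
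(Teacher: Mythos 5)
Your proposal is correct and takes essentially the same route as the paper: part (i) by differentiating and subtracting the two Frostman identities, invoking Bers' approximation to pass to all of $\mathcal{A}^1(\Omega)$, and using positivity of $\Delta Q$ to show every component contains a node; part (ii) by rerunning the Cauchy--Green/M\"obius/Mergelyan argument for $V$ in place of $Q$, with the $\ln|z-z_j|^{-1}$ terms differentiating into Cauchy kernels that become point evaluations. The care you flag about the Jacobian factor, the location of the transformed nodes, and the density passages is precisely what the paper's own (sketchier) treatment of part (ii) leaves implicit.
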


	\section{Quadrature Domains for modified measures}
	\label{modified}
	
	In anticipation of describing some equilibrium supports as above using the particular external field $Q(z)=C|z|^{2p}$, $C>0$, we now identify quadrature domains for the corresponding measures mentioned in Theorem \ref{quadid}. In this case $\Delta Q(z)=4p^2|z|^{2p-2}$ and $\partial Q (z)=pz^{p-1}\bar{z}^p.$ So let us focus on quadrature domains with respect to the weighted area measure $|z|^{2p}dA$, and the complex boundary measure $|z|^{-2p}dz$. In this section we treat such domains in their own right. In the next section we will make the connection again to equilibrium measures.
	
	The first thing to note is that quadrature domains with respect to these measures are related by Green's theorem.
	
	\begin{thm} Let $p$ be a natural number, and $\Omega$ a smooth bounded domain in the plane, $0 \notin \partial \Omega$, such that for some constants $c_{jk}$, and points $z_j \in \Omega$, the quadrature identity
		\[\int_{\partial \Omega}\frac{f(z)}{|z|^{2p}}dz=\sum_{j=1}^N\sum_{k=0}^{n_j} c_{jk} f^{(k)}(z_j)   \]
		holds for $f \in \mathcal{H}^2({\Omega})$. Then $\Omega$ is a quadrature domain with test class $\mathcal{A}^2(\Omega)$ with respect to the weighted area measure $|z|^{2p-2}dA$. 
	\end{thm}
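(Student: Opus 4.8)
The plan is to convert the target area integral into a boundary integral by Green's theorem and then to evaluate that boundary integral by residues, using the hypothesis to supply the one missing ingredient: an analytic (in fact meromorphic) expression for $\bar z^{\,p}$ along $\partial\Omega$. The point to appreciate at the outset is that the two measures are \emph{not} Green's-theorem partners in the naive sense --- applying $\partial_{\bar z}$ to $f(z)|z|^{-2p}$ produces a density proportional to $f(z)|z|^{-2p}\bar z^{-1}$, not $|z|^{2p-2}$ --- so the boundary quadrature identity must be exploited not merely as an integral identity but as a statement of analytic continuation.

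First I would extract a Schwarz-type function from the hypothesis. Writing the right-hand side of the boundary identity via the Cauchy kernel, set $R(z):=\frac{1}{2\pi i}\sum_{j,k}\frac{k!\,c_{jk}}{(z-z_j)^{k+1}}$, a rational function whose only poles lie at the nodes $z_j\in\Omega$; by Cauchy's formula $\int_{\partial\Omega}f(z)R(z)\,dz=\sum_{j,k}c_{jk}f^{(k)}(z_j)$ for every $f\in\mathcal H^2(\Omega)$. Subtracting, the hypothesis reads $\int_{\partial\Omega}f(z)\bigl(|z|^{-2p}-R(z)\bigr)\,dz=0$ for all $f\in\mathcal H^2(\Omega)$. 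Taking $f(z)=(z-w)^{-1}$ with $w$ in any complementary component shows that the Cauchy transform of $\psi:=|z|^{-2p}-R$ vanishes off $\bar\Omega$, and the Plemelj jump relation then identifies the interior boundary value of the (interior-holomorphic) Cauchy transform of $\psi$ with $\psi$ itself. Hence $\psi$ continues holomorphically into $\Omega$, and $\mathcal S:=R+(\text{that continuation})$ is meromorphic on $\Omega$, smooth up to $\partial\Omega$ off its poles, with poles only among the $z_j$, and satisfies $\mathcal S(z)=|z|^{-2p}$ on $\partial\Omega$. In particular $\bar z^{\,p}=\bigl(z^{p}\mathcal S(z)\bigr)^{-1}$ on $\partial\Omega$, which is legitimate since $0\notin\partial\Omega$.

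Next I would run Green's theorem on the area side. For $g\in\mathcal A^\infty(\Omega)$ one has $\partial_{\bar z}\bigl(\tfrac1p g(z)z^{p-1}\bar z^{\,p}\bigr)=g(z)|z|^{2p-2}$, so $\int_\Omega g(z)|z|^{2p-2}\,dA=\frac{1}{2pi}\int_{\partial\Omega}g(z)z^{p-1}\bar z^{\,p}\,dz$. Substituting the boundary relation for $\bar z^{\,p}$ turns this into $\frac{1}{2pi}\int_{\partial\Omega}\frac{g(z)}{z\,\mathcal S(z)}\,dz$, whose integrand is meromorphic on $\Omega$ with finitely many poles --- a simple pole at the origin when $0\in\Omega$, together with the zeros of $\mathcal S$ in $\Omega$, these being finite in number because $\mathcal S=|z|^{-2p}\neq0$ on $\partial\Omega$. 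The residue theorem then expresses the integral as a finite linear combination of $g$ and its derivatives evaluated at these points, which is precisely a quadrature identity for $|z|^{2p-2}\,dA$. Finally I would pass from $\mathcal A^\infty(\Omega)$ to the full test class $\mathcal A^2(\Omega)$ by density, using that interior point evaluations and their derivatives are bounded functionals on $\mathcal A^2(\Omega)$.

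The routine parts are the two applications of Green's theorem and the residue bookkeeping; the crux --- and the step I expect to require the most care --- is the passage from the boundary quadrature identity to the meromorphic Schwarz function $\mathcal S$, that is, upgrading an identity holding only against analytic test functions to a pointwise analytic-continuation statement. This is where the Cauchy-transform/Plemelj argument (or, alternatively, an appeal to the known existence of Schwarz functions for quadrature domains) does the real work, and where one must track the possible pole of $\mathcal S$ or of $1/z$ at the origin, the orientation and sign conventions in the jump relation, and the bounded complementary components should $\Omega$ fail to be simply connected.
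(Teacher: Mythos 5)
Your proposal is correct and follows essentially the same route as the paper's proof: extract a rational function $R$ from the nodes via Cauchy's formula, upgrade the boundary identity to the statement that $|z|^{-2p}$ agrees on $\partial\Omega$ with the boundary values of a function meromorphic on $\Omega$ and smooth up to $\partial\Omega$ (you prove this via the Cauchy transform and Plemelj jump formula, which is precisely the content of the Hardy-space orthogonal-complement structure the paper cites from Bell's book), then convert the weighted area integral to a boundary integral by Green's theorem and finish by the residue theorem plus density of $\mathcal{A}^\infty(\Omega)$ in the Bergman space. The only cosmetic difference is that you absorb the paper's case distinction ($0\in\Omega$ versus $0\notin\overline{\Omega}$) into the residue bookkeeping for $g/(z\,\mathcal{S})$ at the origin, whereas the paper handles $0\in\Omega$ by applying its identity to $(f-f(0))/z$; both yield the same quadrature identity.
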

	
	\begin{proof}
		For $f\in \mathcal{A}^\infty{(\Omega)}$, rewrite the right hand side of the quadrature identity using the Cauchy formula. Then note that there exists a rational function $R$ on $\Omega$ which is smooth up to the boundary with poles located at the $z_j$ (with order $n_j$) such that
		\[\int_{\partial \Omega} \frac{f(z)}{|z|^{2p}}dz= \int_{\partial \Omega} R(z)f(z)dz \]
		for all such $f$.
		
		Such $f$ are dense in the Hardy space. The structure of the orthogonal complement of the Hardy space on a smooth bounded domain (see \cite{BellBook}) ensures there is an $H \in \mathcal{A}^\infty({\Omega})$ such that
		\[\frac{1}{|z|^{2p}}-R(z)=H(z), \quad \quad \quad z \in \partial \Omega. \]
		Thus $|z|^{2p}$ has the same boundary values as does a meromorphic function smooth up to the boundary of $\Omega$.  This means that $\int_{\partial \Omega} f(z)|z|^{2p}dz$ is equal to a linear combination of evaluations of $f$ and its derivatives (the same linear combination for all $f$) by the Residue theorem.  For brevity, let $\mathcal{L}: \mathcal{A}^\infty({\Omega}) \to \mathbb{C}$ denote this linear combination of evaluations, and
		\[\int_{\partial \Omega} f(z) |z|^{2p}dz=\mathcal{L}f. \] 
		By Green's theorem
		\[\int_\Omega f(z)z^p\bar{z}^{p-1}dA=\frac{1}{2i}\mathcal{L}f. \] From here we consider separately the cases where $0 \in \Omega$ and $0 \notin \overline{\Omega}$.
		
		First, suppose $0 \notin \overline{\Omega}$. Then
		\[\int_\Omega f(z) |z|^{2p-2}dA=\int_\Omega \frac{f(z)}{z}z^p\bar{z}^{p-1}dA=\frac{1}{2i}\mathcal{L} \frac{f(z)}{z}, \] which is a quadrature identity.
		
		In case $0 \in \Omega$ we have
		
		\[\int_\Omega (f(z)-f(0))|z|^{2p-2}dA=\int_{\Omega} \frac{f(z)-f(0)}{z} z^p \bar{z}^{p-1}dA= \frac{1}{2i} \mathcal{L} \frac{f(z)-f(0)}{z}.  \]
		Hence we see
		\[ \int_\Omega f(z)|z|^{2p-2}dA=kf(0)+\frac{1}{2i} \mathcal{L} \frac{f(z)-f(0)}{z}, \]
		where $k=\int_\Omega |z|^{2p-2}dA$. This is a quadrature identity (which may introduce evaluations of derivatives of $f$ at $z=0$.)

	\end{proof}
	
	\subsection{One-point quadrature domains whose closures exclude the origin}
	
	For unweighted planar Lebesgue measure $dA$, it is well known that the only one-point quadrature domains for integrable harmonic functions are discs \cite{ES}. With the measures we are now considering, other possibilities arise. 
	
	We will consider separately the cases where the origin is in the domain or outside the closure. By a $p$th {\it root of a disc}, we will mean a set $\{ \sqrt[p]{z}: \, |z-a|<r \}$ for a given complex number $a$ and a positive number $r$, $r<|a|$, and where $\sqrt[p]{z}$ denotes an analytic branch of $z^{1/p}$ defined on the disc $|z-a|<r$. Recall that unless stated otherwise, the test class for quadrature identities is $\mathcal{A}^\infty$.
	
	\begin{thm}
		\label{discroot}
		With $p$ a natural number, let $\Omega$, $0 \notin \overline{\Omega}$, be a smooth bounded one-point quadrature domain with respect to the weighted area measure $|z|^{2p-2}dA$. Then $\Omega$ is a $p$th root of a disc.
	\end{thm}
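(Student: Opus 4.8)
The plan is to reduce the statement to the classical characterization of one‑point quadrature domains for ordinary area measure (discs) by means of the change of variables $w=z^p$, and then to pull the resulting disc back through this map. Since $0\notin\overline{\Omega}$, the map $\psi(z)=z^p$ is holomorphic on a neighborhood of $\overline{\Omega}$ with $\psi'(z)=pz^{p-1}\neq 0$ there, so it is a local biholomorphism whose real Jacobian is $|\psi'(z)|^2=p^2|z|^{2p-2}$ — precisely the quadrature weight up to the constant $p^2$. Write the one‑point identity as $\int_\Omega f(z)\,|z|^{2p-2}\,dA_z=c\,f(z_0)$ for all $f\in\mathcal{A}^\infty(\Omega)$, where $z_0\in\Omega$ and $c=\int_\Omega|z|^{2p-2}\,dA$. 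For any $g\in\mathcal{A}^\infty(G)$ with $G:=\psi(\Omega)$, the composite $g\circ\psi$ lies in $\mathcal{A}^\infty(\Omega)$, and the area formula for the holomorphic map $\psi$ gives
\[\int_\Omega g(z^p)\,|z|^{2p-2}\,dA_z=\frac{1}{p^2}\int_G g(w)\,N(w)\,dA_w,\]
where $N(w)=\#\bigl(\psi^{-1}(w)\cap\Omega\bigr)$ is the bounded, integer‑valued sheet‑counting function. Combined with the quadrature identity this yields $\int_G g(w)\,N(w)\,dA_w=p^2c\,g(w_0)$ with $w_0=z_0^p$, so $G$ carries a one‑point quadrature identity for the a priori multiplicity‑weighted measure $N\,dA$.

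The heart of the proof — and the step I expect to be the main obstacle — is to show $N\equiv 1$ on $G$, i.e. that $\psi$ is injective on $\Omega$, equivalently that the rotated copies $e^{2\pi i k/p}\Omega$, $k=0,\dots,p-1$, are pairwise disjoint. For this I would not restrict to the symmetric test functions $g\circ\psi$, which only detect the holomorphic moments of $N\,dA$ about $w_0$ and (after conjugation, since $N$ is real) the antiholomorphic ones — not enough to force radial symmetry. Instead I would use the full strength of the identity in the $z$‑plane. Taking $f(z)=(z-\zeta)^{-1}$ for $\zeta\notin\overline{\Omega}$ gives the exterior relation $\int_\Omega\frac{|z|^{2p-2}}{z-\zeta}\,dA_z=\frac{c}{z_0-\zeta}$, so the weighted mass $|z|^{2p-2}\mathbf{1}_\Omega\,dA$ produces, outside $\overline{\Omega}$, exactly the field of a single point source at $z_0$. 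The rigidity of such a one‑point (partial‑balayage) configuration, together with the connectedness of $\Omega$ and the separation $0\notin\overline{\Omega}$, should preclude any multi‑sheeted, wrap‑around behaviour: a domain winding around the origin would make the functional $f\mapsto\int_\Omega f\,|z|^{2p-2}\,dA$ average over a whole fiber of $\psi$ rather than evaluate at a single point (as happens, for instance, for an annulus about the origin), contradicting the one‑point hypothesis. Hence the covering $\psi\colon\Omega\to G$ has degree one and $N\equiv 1$.

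With $N\equiv 1$ the identity reads $\int_G g(w)\,dA_w=p^2c\,g(w_0)$ for every $g\in\mathcal{A}^\infty(G)$, so that $G$ is a genuine one‑point quadrature domain for ordinary area measure; the classical theorem that the only such domains are discs \cite{ES} then identifies $G=\{\,|w-a|<r\,\}$ for some $a$ and $r$. Because $0\notin\overline{\Omega}$ forces $0\notin\overline{G}$, the center satisfies $r<|a|$. Finally, $\psi|_\Omega$ is an injective holomorphic map onto this disc, so its inverse is an analytic branch of $w\mapsto w^{1/p}$ on $\{\,|w-a|<r\,\}$ (well defined there since the disc avoids the origin); therefore $\Omega=\{\sqrt[p]{w}:|w-a|<r\}$ is a $p$th root of a disc, as claimed.
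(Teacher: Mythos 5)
Your reduction scheme (transplant by $w=z^p$, invoke the classical one-point quadrature theorem, pull the disc back) is sound in outline, and the surrounding bookkeeping — the area formula with the sheet-counting function $N$, the identity $c=\int_\Omega|z|^{2p-2}\,dA$, the final pull-back step — is correct. But the step you yourself identify as the heart of the proof, $N\equiv 1$, is not proved, and what you offer in its place is not an argument. Two concrete problems. First, your heuristic equates failure of injectivity with ``wrap-around'' behaviour, but that equivalence is false: a simply connected C-shaped domain avoiding the origin and subtending an angle greater than $2\pi/p$ contains pairs $z$ and $e^{2\pi ik/p}z$, so $z\mapsto z^p$ fails to be injective on it, yet it winds around nothing; your proposed dichotomy never sees this case. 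Second, even for genuinely winding domains, the assertion that the exterior identity $\int_\Omega\frac{|z|^{2p-2}}{z-\zeta}\,dA_z=\frac{c}{z_0-\zeta}$ ``should preclude'' multi-sheetedness is precisely the rigidity statement that needs proof; the claim that such a domain would force the functional to ``average over a whole fiber'' tacitly assumes a rotational symmetry of $\Omega$ that you do not have. So the main obstacle you anticipated is, as written, a genuine gap.

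For comparison, the paper never passes to $\psi(\Omega)$ at all: it applies the quadrature identity to $f(z)/z^{p-1}$ (legitimate since $0\notin\overline{\Omega}$), converts to a boundary integral by Green's theorem, and uses the structure of the orthogonal complement of the Hardy space to produce $H\in\mathcal{A}^\infty(\Omega)$ with $\bar{z}^p=H(z)+C/(z-z_0)$ on $\partial\Omega$; algebraic manipulation then exhibits an analytic function equal to $|z^p-z_0^p|^2$ on $\partial\Omega$, which the maximum principle forces to be a positive constant $K^2$, and the hypothesis $0\notin\overline{\Omega}$ rules out $K\geq|z_0^p|$. The single boundary identity $|z^p-z_0^p|=K$ then delivers simultaneously the injectivity you are missing and the conclusion, with no appeal to the classical disc theorem. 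If you want to salvage your route, you would essentially have to derive this identity first — at which point the transplantation becomes redundant. A secondary issue: the classical result you cite (\cite{ES}) is stated for harmonic test functions, whereas your pulled-back identity on $G$ holds only for analytic test functions, and you have not shown $G$ is simply connected; the analytic-class statement for general bounded domains is a deeper theorem, so that citation would need sharpening as well.
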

	
	\begin{proof}
		By hypothesis there exists a complex constant $c$ and a point $z_0\in \Omega$ such that for all $f \in \mathcal{A}^\infty({\Omega})$,
		\[\int_\Omega f(z)|z|^{2p-2}dA=cf(z_0). \] 
		Since the origin is excluded from the closure of the domain, $f(z)/z^{p-1}$ is analytic and so
		\[\int_\Omega f(z) \bar{z}^{p-1}dA = \int_\Omega \frac{f(z)}{z^{p-1}} |z|^{2p-2}dA= \frac{c}{z_0^{p-1}}f(z_0). \] Now use Green's theorem, rewrite the right hand side using the Cauchy formula, and collect the terms to obtain for a new constant $C$
		\[\int_{\partial \Omega} f(z)(\bar{z}^{p}-\frac{C}{z-z_0})dz=0. \]
		By orthogonality to the Hardy space, we conclude that there exists $H \in \mathcal{A}^\infty (\Omega)$ such that 
		\[\bar{z}^{p}=H(z)+\frac{C}{z-z_0}, \quad \quad z \in \partial \Omega.\] 
		On setting $G(z):=H(z)(z-z_0)+C$, we have
		\[\bar{z}^{p}=\frac{G(z)}{z-z_0}, \quad \quad z \in \partial \Omega\]
		Let $T(z):=\sum_{j=0}^{p-1} z^{p-1-j}z_0^j$ and observe
		\[\bar{z}^{p}-\bar{z_0}^{p}=\frac{G(z)T(z)}{z^{p}-z_0^{p}}-\bar{z_0}^{p}, \quad z \in \partial \Omega \]
		Clearing the denominator,
		\[|z^{p}-z_0^{p}|^2=h(z), \quad \quad z \in \partial \Omega \]
		where $h(z):=G(z)T(z)-\bar{z_0}^{p}(z^{p}-z_0^{p})$ is analytic and smooth up to the boundary of $\Omega$.
		
		From here, the imaginary part of $h$ is $0$ on the boundary of $\Omega$, and by the maximum principle $h$ must be a positive real constant throughout $\Omega$, say $h(z) = K^2>0$.
		Thus on the boundary of $\Omega$
		\[|z^{p}-z_0^{p}|=K. \]
		
		For each $z$ on the boundary of $\Omega$, $z^{p}$ is on the circle of radius $K$ about $z_0^{p}$. If $K>|z_0^{p}|$, then the union of all values of branches of the $p$th roots of the points on the circle would comprise the boundary a single bounded domain, which would contain the origin. Then $\Omega$ would be this very domain, which is impossible since $0 \notin \Omega$.  Similarly if $K=|z_0^{p}|$ then $0$ would be a boundary point of $\Omega$ which is impossible.
		
		Thus $K<|z_0^{p}|,$ and taking ${p}$th roots of points on the circle of radius $K$ about $z_0^{p}$ gives a union of closed smooth curves, one for each branch of the root. One of these must be the boundary of $\Omega$. In other words, $\Omega$ is a $p$th root of a disc.
	\end{proof}
	
	Remark: the cavity in Figure \ref{fig2} is a root of a disc. 
	
	The case of boundary measure $|z|^{-2p}dz$ can be reduced to the theorem which was just proved.
	
	\begin{cor}
		\label{bdiscroot}
		If $\Omega$ is a smooth bounded one-point quadrature domain with respect to the boundary measure $|z|^{-2p}dz$, and if the origin is outside the closure of $\Omega$, then $\Omega$ is a $p$th root of a disc.
	\end{cor}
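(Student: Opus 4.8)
The plan is to reduce the statement to Theorem~\ref{discroot} by converting the given boundary one-point quadrature into a \emph{one-point} area quadrature with respect to $|z|^{2p-2}\,dA$, exactly as the sentence preceding the corollary suggests. The conversion machinery is already available: Green's theorem together with the orthogonality-to-Hardy-space argument used in the proofs of the first theorem of this section and of Theorem~\ref{discroot}. The only genuinely new issue is to verify that a single point evaluation on the boundary side produces a single point evaluation (a function value, not a derivative) on the area side, since Theorem~\ref{discroot} applies only to honest one-point area quadrature domains.

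First I would rewrite the hypothesis $\int_{\partial\Omega} f(z)|z|^{-2p}\,dz = c\,f(z_0)$ by expressing its right-hand side through the Cauchy integral formula, obtaining
\[
\int_{\partial\Omega} f(z)\left(\frac{1}{z^p\bar z^p}-\frac{c}{2\pi i\,(z-z_0)}\right)dz = 0
\]
for all $f\in\mathcal{A}^\infty(\Omega)$. Orthogonality to the Hardy space then yields an $H\in\mathcal{A}^\infty(\Omega)$ with $\frac{1}{z^p\bar z^p}-\frac{c}{2\pi i\,(z-z_0)}=H(z)$ on $\partial\Omega$. Clearing denominators, which is legitimate because $0\notin\overline\Omega$ makes $z^p$ analytic and zero-free, gives
\[
|z|^{2p}=\frac{2\pi i\,(z-z_0)}{B(z)},\qquad z\in\partial\Omega,
\]
where $B(z):=2\pi i\,(z-z_0)H(z)+c$ is analytic and smooth up to the boundary with $B(z_0)=c$. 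Next I would use Green's theorem as in the preceding proofs to write $\int_\Omega f(z)|z|^{2p-2}\,dA$ as a nonzero constant multiple of $\int_{\partial\Omega}\frac{f(z)}{z}\,|z|^{2p}\,dz$, and substitute the boundary identity for $|z|^{2p}$. This turns the area integral into a constant multiple of $\int_{\partial\Omega}\frac{f(z)(z-z_0)}{z\,B(z)}\,dz$, which the residue theorem evaluates as a sum of point evaluations over the zeros of $B$ inside $\Omega$ (note $z=0$ lies outside $\overline\Omega$, and $z_0$ is not a zero of $B$ since $B(z_0)=c\neq0$).

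The main obstacle, and the crux of the argument, is to show that $B$ has exactly one zero in $\Omega$ and that it is simple; this is what forces the area quadrature to be genuinely one-point. I would settle it with the argument principle: on $\partial\Omega$ we have $B(z)=2\pi i\,(z-z_0)/|z|^{2p}$, and since $|z|^{2p}$ is real and positive, $\arg B(z)=\tfrac{\pi}{2}+\arg(z-z_0)$. As $z$ traverses $\partial\Omega$ once positively and $z_0\in\Omega$, the quantity $\arg(z-z_0)$ increases by $2\pi$, so the total change in $\arg B$ is $2\pi$; hence $B$ has precisely one zero $w^*$ in $\Omega$, necessarily simple. The pole of $\frac{f(z)(z-z_0)}{z\,B(z)}$ at $w^*$ is then simple, because $w^*\neq 0$ and $w^*\neq z_0$, so its residue is a fixed constant times $f(w^*)$. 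This establishes a one-point area quadrature $\int_\Omega f(z)|z|^{2p-2}\,dA=c'\,f(w^*)$, and Theorem~\ref{discroot} immediately gives that $\Omega$ is a $p$th root of a disc. One should also record the degenerate reading $c=0$, i.e.\ a null boundary quadrature, as excluded or handled separately, since a genuine one-point identity presumes $c\neq0$.
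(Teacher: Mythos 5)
Your proposal is correct and takes essentially the same route as the paper's own proof: Cauchy formula plus orthogonality to the Hardy space to obtain the boundary identity, the argument principle to show the analytic numerator has exactly one simple zero $\xi\neq 0,z_0$ in $\Omega$, and then the residue theorem combined with Green's theorem to convert the boundary quadrature into a one-point quadrature for $|z|^{2p-2}\,dA$, at which point Theorem~\ref{discroot} applies. The differences are cosmetic only: you compute the winding of $B$ directly (the paper phrases the same step as roots-equal-poles for the meromorphic right-hand side and factors $B(z)=g(z)(z-\xi)$ before integrating), and you flag the degenerate case $c=0$, which the paper leaves implicit.
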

	\begin{proof}
		For some constant $c$ and point $z_0 \in \Omega$ we have for all $f \in \mathcal{A}^\infty({\Omega})$
		\[\int_{\partial \Omega} \frac{f(z)}{|z|^{2p}}dz=cf(z_0). \]	
		Using the Cauchy formula on the right side and collecting terms to the left side, we have
		\[\int_{\partial \Omega} f(z)\, \big{(}\, \frac{1}{|z|^{2p}}-\frac{c}{2 \pi i (z-z_0)}  \, \big{)}dz=0. \]
		By orthogonality in the Hardy space, there exists $H \in \mathcal{A}^{\infty}( {\Omega})$ such that 
		\[\frac{1}{|z|^{2p}}= H(z)+\frac{c}{2 \pi i (z-z_0)}, \quad \quad z \in \partial \Omega.  \]
		By the Argument Principle, since the left side exhibits no winding of argument around the boundary of $\Omega$, the meromorphic function on the right side has as many roots as poles (namely one) in $\Omega$. So for some nonzero $\xi \neq z_0$ and nonvanishing function $g \in \mathcal{A}^\infty({\Omega})$,
		\[|z|^{2p}=\frac{2\pi i c (z-z_0)}{g(z)(z-\xi)}, \quad \quad z \in \partial \Omega. \] 
		Since $z=0$ is outside the closure of $\Omega$, we write
		\[z^{p-1}\bar{z}^p=\frac{2\pi i c (z-z_0)}{g(z)(z-\xi)z}, \quad \quad z \in \partial \Omega. \]
		By the Residue Theorem there is a constant $C$ such that for all $f \in \mathcal{A}^\infty({\Omega})$
		\[\int_{\partial \Omega} f(z)z^{p-1}\bar{z}^pdz=Cf(\xi), \quad \quad z \in \partial \Omega. \]
		Use Green's Theorem on the left hand side:
		\[\int_\Omega f(z)|z|^{2p-2}dA=2Cif(\xi). \]
		In other words, $\Omega$ is a one-point quadrature domain with respect to weighted area measure $|z|^{2p-2}dA$, and we appeal to Theorem \ref{discroot}.
	\end{proof}
	
	\subsection{Quadrature nodes at the origin.}	
	
	Now we consider one-point quadrature domains which contain the origin. In this case, if the origin is the quadrature node and the measure for quadrature is $|z|^{2p}dA$, then the domain must be a disc.
	
	\begin{thm}
		If $\Omega$ is a smooth bounded one-point quadrature domain with respect to weighted area measure $|z|^{2p}dA$ whose quadrature node is the origin, then $\Omega$ is a disc centered at the origin.
	\end{thm}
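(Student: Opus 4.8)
The plan is to follow the same template as in Theorem \ref{discroot} and Corollary \ref{bdiscroot}: convert the area quadrature identity into a boundary relation, use orthogonality to the Hardy space to extract an analytic function, and then invoke the maximum principle to force the boundary to be a circle. By hypothesis there is a constant $c$ with $\int_\Omega f(z)|z|^{2p}\,dA = c\,f(0)$ for all $f \in \mathcal{A}^\infty(\Omega)$, and since the origin is the quadrature node we have $0 \in \Omega$.

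First I would write $|z|^{2p}=z^p\bar z^p$ and apply Green's theorem to move the area integral to the boundary. Taking the antiderivative $g(z)=\frac{1}{p+1}f(z)z^p\bar z^{p+1}$ of the integrand in $\bar z$, Green's theorem gives $\int_\Omega f z^p\bar z^p\,dA = \frac{1}{2i(p+1)}\int_{\partial\Omega} f(z)z^p\bar z^{p+1}\,dz$, so the quadrature identity becomes $\int_{\partial\Omega} f(z)z^p\bar z^{p+1}\,dz = 2i(p+1)c\,f(0)$. Next I would replace $f(0)$ on the right by the Cauchy integral $\frac{1}{2\pi i}\int_{\partial\Omega} f(z)/z\,dz$ (valid because $0\in\Omega$) and collect terms, obtaining $\int_{\partial\Omega} f(z)\bigl(z^p\bar z^{p+1}-A/z\bigr)\,dz = 0$ for all such $f$, where $A$ is a constant multiple of $c$.

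Then, exactly as in the earlier proofs, orthogonality to the Hardy space produces an $H\in\mathcal{A}^\infty(\Omega)$ with $z^p\bar z^{p+1}=H(z)+A/z$ on $\partial\Omega$ (the pole of $A/z$ sits harmlessly at the interior point $0$, since the relation is only asserted on the boundary). Multiplying through by $z$ gives $|z|^{2(p+1)} = zH(z)+A =: \Phi(z)$ on $\partial\Omega$, where $\Phi$ is analytic and smooth up to the boundary. Because the left-hand side is real, $\operatorname{Im}\Phi$ is harmonic in $\Omega$ and vanishes on $\partial\Omega$, hence is identically zero, so $\Phi$ reduces to a real constant $K$. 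Therefore $|z|^{2(p+1)}=K$ on $\partial\Omega$; as $0\in\Omega$ the boundary avoids the origin, so $K>0$ and $|z|$ is constant there. A smooth closed curve lying on a circle centered at the origin must be that entire circle, whence $\Omega$ is the open disc centered at the origin.

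I expect the delicate points to be bookkeeping rather than conceptual. The main one is justifying the orthogonality step, namely that a boundary function annihilating all of $\mathcal{A}^\infty(\Omega)$ under the $dz$-pairing must agree with the boundary values of some $\mathcal{A}^\infty(\Omega)$ function; this is the same Hardy-space structure result already used in Theorem \ref{discroot}, so it carries over verbatim. Beyond that, one must simply track the Green's theorem constant correctly, and the concluding topological observation that $\partial\Omega\subseteq\{|z|=\sqrt[2(p+1)]{K}\}$ forces $\Omega$ to be the full disc, which is routine under the standing smoothness assumption.
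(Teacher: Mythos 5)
Your proposal is correct and follows essentially the same route as the paper's proof: Green's theorem to move to the boundary, the Cauchy formula for $f(0)$, orthogonality to the Hardy space to produce $H\in\mathcal{A}^\infty(\Omega)$ with $z^p\bar z^{p+1}=H(z)+A/z$ on $\partial\Omega$, multiplication by $z$, and the vanishing-imaginary-part argument forcing $|z|^{2p+2}$ to be constant on $\partial\Omega$. The only difference is that you spell out the bookkeeping (the $1/(p+1)$ constant, positivity of $K$, and the final topological step) that the paper compresses into ``the rest is clear.''
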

	\begin{proof}
		We have in this case for all $f$ analytic and smooth up to the boundary
		\[\int_\Omega f(z)|z|^{2p}dA=cf(0). \]
		As in the proofs above, use Green's theorem on the left to get an integral on the boundary, and rewrite the right side using the Cauchy formula, subtract, and use orthogonality in the Hardy space to conclude that for some $H$ holomorphic and smooth up to the boundary of $\Omega$
		\[z^p\bar{z}^{p+1}=H(z)+\frac{c}{2\pi i z}, \quad \quad z \in \partial \Omega. \]
		Just multiply by $z$ and get
		\[|z|^{2p+2}=zH(z)+\frac{c}{2 \pi i}, \quad \quad z \in \partial \Omega. \]
		The rest is clear: the imaginary part of the holomorphic function on the right must be everywhere $0$, so that $|z|^{2p+2}$ is a real positive constant on the boundary of $\Omega$.
	\end{proof}
	
	The corresponding case for boundary measure $|z|^{-2p}dz$ will reduce to the case of weighted area measure, but with quadrature node away from the origin.
	
	\begin{prop}
		\label{propo}
		Let $\Omega$ be a smooth bounded one-point quadrature domain with respect to boundary measure $|z|^{-2p}dz$, whose quadrature node is the origin. Then $\Omega$ is a one-point quadrature domain with respect to weighted area measure $|z|^{2p-2}dA$, with quadrature node away from the origin.
	\end{prop}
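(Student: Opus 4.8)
The plan is to follow the template of Corollary \ref{bdiscroot}; the only essential change is that the quadrature node now sits at the origin, which lies interior to $\Omega$ rather than outside its closure. By hypothesis there is a constant $c$ with
\[\int_{\partial\Omega}\frac{f(z)}{|z|^{2p}}\,dz=cf(0)\]
for every $f\in\mathcal{A}^\infty(\Omega)$. First I would rewrite the right-hand side by the Cauchy formula $f(0)=\frac{1}{2\pi i}\int_{\partial\Omega}f(z)z^{-1}\,dz$ (legitimate since $0\in\Omega$), collect everything under one integral, and invoke orthogonality to the Hardy space to produce an $H\in\mathcal{A}^\infty(\Omega)$ with
\[\frac{1}{|z|^{2p}}=H(z)+\frac{c}{2\pi i\,z},\qquad z\in\partial\Omega.\]

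Next I would locate the displaced node by the Argument Principle, just as in the corollary. The left-hand side is real and positive on $\partial\Omega$, so it has trivial winding about the origin; hence the meromorphic function on the right has equally many zeros and poles in $\Omega$. Its only pole is the simple one at $z=0$ (the node weight $c$ being nonzero), so it has exactly one zero $\xi\in\Omega$, and $\xi\neq0$ since the origin is a pole and cannot also be a zero. Passing to reciprocals and dividing by $z$ (permissible because $0\notin\partial\Omega$, and using $|z|^{2p}=z^p\bar z^p$), I arrive at the boundary relation
\[z^{p-1}\bar z^{p}=\frac{G(z)}{z-\xi},\qquad z\in\partial\Omega,\]
where $G\in\mathcal{A}^\infty(\Omega)$ is nonvanishing.

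Finally I would convert this into an area quadrature identity. For $f\in\mathcal{A}^\infty(\Omega)$ the Residue Theorem gives
\[\int_{\partial\Omega}f(z)\,z^{p-1}\bar z^{p}\,dz=2\pi i\,f(\xi)G(\xi),\]
the integrand $f(z)G(z)/(z-\xi)$ having a single simple pole at $\xi$. Applying Green's theorem to the left-hand side, with $\bar\partial\!\left(f\,z^{p-1}\bar z^{p}\right)=p\,f\,|z|^{2p-2}$, turns it into $2ip\int_\Omega f(z)|z|^{2p-2}\,dA$, whence
\[\int_\Omega f(z)\,|z|^{2p-2}\,dA=\frac{\pi\,G(\xi)}{p}\,f(\xi).\]
This is a one-point quadrature identity for the weighted area measure $|z|^{2p-2}dA$ with node $\xi\neq0$ and nonzero weight $\pi G(\xi)/p$, which is exactly the assertion.

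I expect the main obstacle to be the node-location step: verifying by the Argument Principle that the right-hand meromorphic function has exactly one interior zero and that this zero is genuinely off the origin, so that the node truly migrates away from $0$. This hinges on $1/|z|^{2p}$ being real and positive on $\partial\Omega$ (hence of zero winding) and on the simple pole at $0$ precluding a zero there; the remaining manipulations—Cauchy formula, Hardy-space orthogonality, and the Residue and Green's theorems—are routine and mirror the earlier proofs in this section.
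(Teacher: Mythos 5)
Your proof is correct and takes essentially the same route as the paper's, which likewise reduces to the argument of Corollary \ref{bdiscroot} with $0$ in place of $z_0$: Cauchy formula plus Hardy-space orthogonality, then the Argument Principle (zero winding of the positive boundary function $1/|z|^{2p}$, one simple pole at $0$) to locate a single zero $\xi\neq 0$, and finally the boundary relation $z^{p-1}\bar{z}^p=G(z)/(z-\xi)$ converted by the Residue and Green's theorems into the area identity. Your write-up only differs in making the constants and the implicit assumption $c\neq 0$ explicit.
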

	\begin{proof} 
		The proof is very similar to that of Corollary \ref{bdiscroot}, with $0$ in place of $z_0$.  Arguing similarly, we arrive at
		\[ \frac{1}{|z|^{2p}}=\frac{h(z)}{z}, \quad \quad z \in \partial \Omega. \]
		By the argument principle, since the left side has no winding, the function $h$ has exactly one root, at a point $z_0 \neq 0$.  Thus for some nonvanishing analytic function $g$
		\[|z|^{2p}=\frac{z}{g(z)(z-z_0)}, \quad \quad z \in \partial \Omega, \]
		or
		\[z^{p-1}\bar{z}^p=\frac{1}{g(z)(z-z_0)}, \quad \quad z \in \partial \Omega. \]
		Now integrate along the boundary, use Green's Theorem on the left hand side and the Residue Theorem on the right side, to conclude that for a constant $C$
		\[\int_{\Omega} f(z)|z|^{2p-2}dA=Cf(z_0). \]
		
	\end{proof}
	
	\subsection{Domains containing $0$, with quadrature node away from $0$.}
	We turn to the case of a one-point quadrature domain with respect to $|z|^{2p}dA$ which contains the origin, but whose quadrature node is not the origin. For this case, under the assumption of simple-connectedness we will be able to describe the domain via a Riemann map. 
	
	So let $\Omega$ be a smooth bounded simply connected one-point quadrature domain with respect to $|z|^{2p}dA$. Assume $0 \in \Omega$, but the quadrature node is $z_0 \neq 0$.  For some constant $c$,
	\[\int_\Omega f(z) |z|^{2p}=cf(z_0) \]
	for all $f$ analytic and smooth up to the boundary. Notice that the function
	\[\frac{ f(z)-\sum_{j=0}^{p-1} \frac{f^{(j)}(0)}{j!}z^j}{z^p} \] is analytic and smooth up to the boundary. Thus
	\[\int_\Omega  (f(z)-\sum_{j=0}^{p-1} \frac{f^{(j)}(0)}{j!}z^j) \bar{z}^p dA=kf(z_0)-\sum_{j=0}^{p-1} c_j f^{(j)}(0), \]
	for some constants $k,c_0,c_1, \cdots, c_{p-1}$. But now move the integrations of the terms under the summation sign to the right side and conclude that for some (different) constants $\kappa,d_0, d_1, \cdots, d_{p-1}$
	\[\int_\Omega f(z)\bar{z}^{p}dA=\kappa f(z_0)+ \sum_{j=0}^{p-1}d_j f^{(j)}(0). \]
	
	Since this quadrature identity will hold in the Bergman space, and since the left hand side is the Bergman space pairing $\langle f(z), \, z^p \rangle,$ we have
	
	\begin{equation}
		\label{Bspan}
		w^p=\kappa K_\Omega (w,z_0) + \sum_{j=0}^{p-1} d_j \frac{\partial^j K_\Omega(w,z)}{\partial \bar{z}^j} \bigg{|}_{z=0},
	\end{equation}
	where $K_\Omega(w,z)$ is the Bergman kernel function of $\Omega$.
	
	Let $\varphi: \mathbb{D} \to \Omega$ be a conformal map which takes $0 \mapsto 0$. We use $t$ to denote points in $\mathbb{D}$. By the Bergman kernel transformation formula \cite{Bergman, BellBook} the above equality can be pulled back to the disc, in the form
	\[\varphi'(t)\varphi(t)^p=aK_\mathbb{D}(t, \zeta_0)+\sum_{j=0}^{p-1} b_j \frac{\partial^j K_\mathbb{D} (t, \zeta)}{\partial \overline{\zeta}^j} \bigg{|}_{\zeta=0}, \]
	for some constants $a, b_0, b_1, \cdots, b_{p-1}$. Since we know the Bergman kernel of the disc to be \[K_\mathbb{D}(t,\zeta)=\frac{1}{\pi(1-t\overline{\zeta})^2},\]
	we have now for some constants $\alpha, \beta_0, \beta_1, \cdots, \beta_{p-1}$
	\[(\varphi(t)^{p+1})'=\frac{\alpha}{(1-t \overline{\zeta}_0)^2}+\sum_{j=0}^{p-1} \beta_j t^j. \]
	That means
	\[\varphi(t)^{p+1}=\frac{P(t)}{1-t \overline{\zeta}_0} \]
	for some polynomial $P(t)$ of degree at most $p+1$ such that $P(0)=0$.
	Notice that the denominator has no roots in the disc, and the meromorphic function on the right must have an analytic $(p+1)$-th root. The origin must be a root of order exactly $p+1$, and consequently for some constant $C$ and appropriately chosen branch for the root, we have
	\begin{equation}
		\label{form}
		\varphi(t)=\frac{Ct}{(1-t \overline{\zeta}_0)^{1/{p+1}}}. \end{equation}
	\begin{thm}
		\label{theo} If $\Omega$ is a smooth bounded simply connected one-point quadrature domain for measure $|z|^{2p}dA$ whose quadrature node is not the origin, then $\Omega$ is the conformal image of the unit disc under a map of the form (\ref{form}) with $\zeta_0 \neq 0$. 
	\end{thm}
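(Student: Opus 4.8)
The plan is to build directly on the computation carried out in the discussion immediately preceding the statement, which has already reduced the hypothesis to a concrete functional equation for a Riemann map. First I would fix a conformal map $\varphi:\mathbb{D}\to\Omega$ with $\varphi(0)=0$, whose existence is guaranteed by simple-connectedness and the Riemann mapping theorem and which extends smoothly to $\overline{\mathbb{D}}$ since $\partial\Omega$ is smooth. Pairing the one-point identity $\int_\Omega f\,|z|^{2p}\,dA=c\,f(z_0)$ against the monomial $z^p$ in the Bergman inner product yields the reproducing-kernel representation (\ref{Bspan}); applying the Bergman kernel transformation law to pull (\ref{Bspan}) back to the disc and inserting the explicit kernel $K_{\mathbb{D}}(t,\zeta)=1/[\pi(1-t\overline{\zeta})^2]$ produces, after one integration normalized by $\varphi(0)=0$, the rational form
\[
\varphi(t)^{p+1}=\frac{P(t)}{1-t\overline{\zeta}_0},
\]
where $P$ is a polynomial of degree at most $p+1$ with $P(0)=0$. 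These steps are routine uses of the reproducing property, the transformation formula, and partial-fraction integration, so I would regard them as established by the preceding discussion.

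The heart of the argument is then to pin down $P$ exactly, and the decisive input is the \emph{univalence} of $\varphi$. Because $\varphi$ is conformal with $\varphi(0)=0$, one has $\varphi'(0)\neq 0$, so $\varphi^{p+1}$ is analytic on $\mathbb{D}$ with a zero of order exactly $p+1$ at the origin (and a nonzero leading coefficient). Since $|\zeta_0|<1$, the denominator $1-t\overline{\zeta}_0$ is zero-free on $\mathbb{D}$ and in particular nonzero at $t=0$, so $P(t)=\varphi(t)^{p+1}(1-t\overline{\zeta}_0)$ inherits a zero of order exactly $p+1$ at the origin. Combined with the bound $\deg P\le p+1$, this forces $P(t)=C^{p+1}t^{p+1}$ for some nonzero constant $C$, whence
\[
\varphi(t)^{p+1}=\frac{C^{p+1}t^{p+1}}{1-t\overline{\zeta}_0}.
\]

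Finally I would extract the root. Since $1-t\overline{\zeta}_0$ is nonvanishing on the simply connected disc, it admits a single-valued analytic $(p+1)$-th root there, while $t^{p+1}$ contributes the factor $t$; selecting the branch so that the right-hand side matches the univalent $\varphi$ near the origin yields precisely the claimed form (\ref{form}). The condition $\zeta_0\neq 0$ is then immediate from the hypothesis $z_0\neq 0$: as $\varphi(0)=0$ and $\varphi$ is injective, $\zeta_0=\varphi^{-1}(z_0)\neq 0$. I expect the only genuinely delicate point to be the zero-order bookkeeping in the middle step — justifying that $\varphi^{p+1}$ vanishes to order exactly $p+1$ at the origin so that the degree constraint collapses $P$ to a pure monomial — since this is where the algebraic normal form is tied back to the geometric fact that the Riemann map is one-to-one, and it is also what legitimizes the passage to a single-valued $(p+1)$-th root.
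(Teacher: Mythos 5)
Your proposal is correct and follows essentially the same route as the paper: the Bergman-space pairing leading to (\ref{Bspan}), the kernel transformation formula pulled back to the disc, integration to $\varphi(t)^{p+1}=P(t)/(1-t\overline{\zeta}_0)$, and then using univalence of $\varphi$ (zero of order exactly $p+1$ at the origin plus the degree bound) to collapse $P$ to a monomial before extracting the $(p+1)$-th root. Your explicit bookkeeping of the zero order and the observation that $\zeta_0=\varphi^{-1}(z_0)\neq 0$ merely spell out details the paper leaves implicit.
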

	{\bf Remarks.} In view of Proposition \ref{propo}, Theorem \ref{theo}  applies to smooth bounded simply connected one-point quadrature domains for boundary measure $|z|^{-2p-2}dz$ whose quadrature node is the origin. In Figure \ref{fig3} we see an example of such a quadrature domain.
	
	\begin{figure}[h]
		\begin{center}
		\includegraphics[scale=0.4]{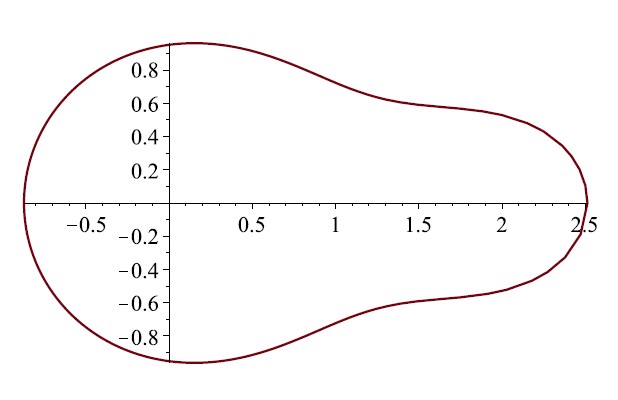}
		\end{center}
		\caption{A one-point quadrature domain for $|z|^{2}dA$ containing $0$, with node away from $0$.}
		\label{fig3}
	\end{figure}

	Lastly we come to smooth bounded one-point quadrature domains for the boundary measure $|z|^{-2p}dz$, with quadrature nodes away from the origin. It will again be possible to describe such domains using conformal mapping under the assumption of simple connectedness. 
	
	Let $\Omega$ be such a domain, for which
	\[\int_{\partial \Omega} \frac{f(z)}{|z|^{2p}}dz=cf(z_0), \]
	for some constant $c$ and point $0 \neq z_0 \in \Omega$, and for all $f \in \mathcal{A}^\infty({\Omega}).$ Using Cauchy's formula, orthogonality in the Hardy Space, and the Argument Principle in the way we have several times above, this means that for some nonvanishing $g \in \mathcal{A}^\infty({\Omega})$ and $\xi \in \Omega$, $\xi \neq z_0$,
	\[|z|^{2p}= \frac{z-z_0}{g(z)(z-\xi)}, \quad \quad z \in \partial \Omega.\]
	We now distinguish two cases, where $\xi \neq 0$ and $\xi=0$.
	
	In the first case, the boundary equality
	\[\bar{z}^p= \frac{z-z_0}{g(z)(z-\xi)z^p}, \quad \quad z \in \partial \Omega \]
	together with Green's Theorem gives
	\begin{equation}
		\label{evals}
		\int_\Omega f(z)\bar{z}^{p-1}dA=\alpha f(\xi)+ \sum_{j=0}^{p-1}\beta_j f^{(j)}(0)
	\end{equation}
	for some constants $\alpha, \beta_j, \, j=0,1,\cdots, p-1$. But now $z^{p-1}$ is a linear combination of Bergman kernel functions, similar to what occurred in \ref{Bspan}. Letting $\varphi(t)$ be a conformal map from the unit disc to $\Omega$ taking $0 \mapsto 0$, we have on the disc
	\[\varphi'(t) \varphi(t)^{p-1}=\frac{k}{(1-\omega \overline{\zeta}_0)}+\sum_{j=0}^{p-1}d_jt^j, \]
	where $k, d_j$ are constants.
	And so for a polynomial $P(t)$ of degree at most $p+1$ vanishing at the origin,
	\[\varphi(t)^p=\frac{P(t)}{1-t \overline{\zeta}_0}. \]
	For the right side to admit an analytic $p$th root, we need $P(t)=at^p+bt^{p+1}$ for some constants $a,b$ with $|a/b|>1$.
	And so we determine
	\begin{equation}
		\label{nonzero}
		\varphi(t)=t \, \big{(}\frac{a+bt}{1-t \bar{\zeta}_0} \big{)}^{1/p}.
	\end{equation}
	
	Now, consider the case where $\xi=0$. The argument begins the same way, using $\xi=0$;  but now instead of (\ref{evals}) we get
	\[\int_\Omega f(z)\bar{z}^{p-1}dA=\alpha f(\xi)+ \sum_{j=0}^{p}\beta_j f^{(j)}(0) \]
	for some constants $\alpha, \beta_j$ (note the upper index on the sum has changed). Now use the Bergman kernel with similar reasoning as above, to conclude that a conformal map from the unit disc to $\Omega$ taking $0 \mapsto 0$ will be of the form
	\begin{equation}
		\label{zero}
		\varphi(t)=t \, \big{(} \frac{a+bt+ct^2}{1-t \overline{\zeta}_0} \big{)}^{1/p}
	\end{equation}
	for some constants $a,b,c$ such that the roots of the quadratic function in the numerator have modulus greater than $1$.
	
	\begin{thm}
		\label{riemann} If $\Omega$ is a smooth bounded simply connected one-point quadrature domain for the boundary measure $|z|^{-2p}dz$, with $0 \in \Omega$  but $0$ not the quadrature node, then $\Omega$ is the conformal image of the unit disc under a map of form (\ref{nonzero}) or (\ref{zero}).
	\end{thm}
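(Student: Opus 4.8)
The plan is to run the same reduction used for Corollary \ref{bdiscroot}, Proposition \ref{propo}, and Theorem \ref{theo}, organized around the two cases $\xi\neq 0$ and $\xi=0$ that were isolated in the discussion preceding the statement. First I would start from the hypothesized identity $\int_{\partial\Omega} f(z)|z|^{-2p}\,dz = cf(z_0)$, rewrite the point evaluation by Cauchy's formula, collect everything under a single boundary integral, and appeal to orthogonality in the Hardy space to produce $H\in\mathcal{A}^\infty(\Omega)$ with $|z|^{-2p}=H(z)+\frac{c}{2\pi i(z-z_0)}$ on $\partial\Omega$. Because $|z|^{-2p}$ exhibits no winding of argument around $\partial\Omega$, the Argument Principle forces the meromorphic right-hand side to have exactly one zero $\xi\neq z_0$ in $\Omega$, so that $|z|^{2p}=\frac{z-z_0}{g(z)(z-\xi)}$ on $\partial\Omega$ for a nonvanishing $g\in\mathcal{A}^\infty(\Omega)$. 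This boundary relation is the common launching point.

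Next I would divide by $z^p$ to obtain a boundary expression for $\bar z^p$, multiply by an arbitrary $f\in\mathcal{A}^\infty(\Omega)$, integrate over $\partial\Omega$, and apply Green's theorem on the left together with the Residue theorem on the right. The two cases separate purely through the order of the pole at the origin: when $\xi\neq 0$ the factor $z^p$ in the denominator gives a pole of order $p$, producing the evaluations $f(0),\dots,f^{(p-1)}(0)$ together with the simple-pole term $f(\xi)$, which is exactly the area identity (\ref{evals}); when $\xi=0$ the pole has order $p+1$, producing one additional derivative term $f^{(p)}(0)$.

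Then I would transfer the problem to the disc. Each area identity says that $z^{p-1}$, read through the Bergman pairing, is a finite linear combination of $K_\Omega(\cdot,\xi)$ and $\bar\partial$-derivatives of $K_\Omega(\cdot,0)$, exactly as in the derivation of (\ref{Bspan}). Pulling back by a Riemann map $\varphi:\mathbb{D}\to\Omega$ with $\varphi(0)=0$, via the Bergman transformation formula and the explicit disc kernel $K_\mathbb{D}(t,\zeta)=\pi^{-1}(1-t\bar\zeta)^{-2}$, expresses $\varphi'\varphi^{p-1}=\tfrac{1}{p}(\varphi^p)'$ as a rational function with a double pole at $1/\bar\zeta_0$ plus a polynomial; antidifferentiating gives $\varphi(t)^p=P(t)/(1-t\bar\zeta_0)$ with $P(0)=0$, where $\deg P\le p+1$ in the first case and $\deg P\le p+2$ in the second.

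The hard part will be pinning down $P$ precisely by the requirement that the right-hand side admit a single-valued analytic $p$th root on all of $\mathbb{D}$ that serves as a conformal map. Since $\varphi(0)=0$ with $\varphi'(0)\neq 0$, the right-hand side must vanish to order exactly $p$ at the origin, forcing $P(t)=t^p(a+bt)$, respectively $t^p(a+bt+ct^2)$; and for the remaining factor $\frac{a+bt}{1-t\bar\zeta_0}$, respectively its quadratic analogue, to admit an analytic $p$th root on the disc it must be zero-free there, which---since the denominator already is---is precisely the condition that the numerator's root(s) lie outside the closed disc, i.e. $|a/b|>1$, respectively both quadratic roots of modulus greater than $1$. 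Extracting the root then yields (\ref{nonzero}) or (\ref{zero}). I expect the delicate bookkeeping to be matching the pole orders to the exact number of derivative terms, and hence to the exact degree of $P$, since it is this count that cleanly separates the two admissible forms; univalence of $\varphi$ requires no separate check, as it is by construction the Riemann map of the smooth simply connected domain $\Omega$.
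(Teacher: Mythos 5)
Your proposal is correct and is essentially the paper's own proof: the same Cauchy-formula/Hardy-orthogonality/Argument-Principle reduction to the boundary relation $|z|^{2p}=\frac{z-z_0}{g(z)(z-\xi)}$ on $\partial\Omega$, the same case split $\xi\neq 0$ versus $\xi=0$, the same Green/residue passage to the area identity (\ref{evals}) and its analogue with one extra derivative term, and the same Bergman-kernel pullback, antidifferentiation, and $p$th-root extraction leading to (\ref{nonzero}) and (\ref{zero}). The only (harmless) imprecision---one you share with the paper---is in the case $\xi=0$: there all point evaluations sit at the origin, so the pulled-back kernel terms are purely polynomial and there is in fact no double pole at $1/\bar{\zeta}_0$, which means $\varphi(t)^p$ is a polynomial $t^p(a+bt)$ and the map degenerates to $\varphi(t)=t(a+bt)^{1/p}$, a special case of (\ref{zero}); so the stated conclusion still holds, merely in a sharper form than claimed.
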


	\section{Applications}
	\label{conclude}
	
	\subsection{The case of \boldmath{$Q(z)=C|z|^{2p}$}}
	
	On the plane with external field $Q(z)=C|z|^{2p}$ with $C>0$, the support $S_Q$ of the equilibrium measure $\mu_Q$ is the closed disc $D_R(0)$ centered at the origin with radius $R:=(2pC)^{-1/2p}$ (see \cite{ST} Theorem IV.6.1).
	
	The equilibrium support $S_V$ in the plane in the presence of $V(z):=(1+q)Q(z)+q \ln|z-z_0|^{-1}$ will depend on $z_0$ and $q$. If $z_0$ is internal to $S_Q$ and if $q$ is small enough, then a cavity will form. On the other hand, if $q$ is large or $z_0$ is far from $S_Q$, then a cavity does not form.
	
	\begin{thm}
		
		Consider the external field $Q(z)=C|z|^{2p}$ in the plane, $C>0$, $p \in \mathbb{N}$. Let $V(z):=(1+q)Q(z)+q\ln|z-z_0|^{-1},$ where $q>0$ and $z_0 \in \mathbb{C}$. Let $S_V:=\text{supp}{(\mu_V)}$ and $S_Q:=\text{supp}(\mu_Q)$ be the supports of the equilibrium measures $\mu_V$, $\mu_Q$ respectively. Then $S_Q$ is the closed disc centered at the origin of radius $R=(2pC)^{-\frac{1}{2p}}$. We have 
		
		\[d\mu_Q=\frac{\Delta Q}{2\pi} dA\big{|}_{S_Q},\]
		\[d\mu_V=\frac{(1+q)\Delta Q}{2 \pi} dA \big{|}_{S_V}.\]
		The support $S_V$ is described as follows:
		
		{\bf (i)} If a cavity forms so that $S_V=S_Q \backslash \Omega$ for an open set $\Omega \subset S_Q$, and if $0 \notin \Omega$, then $\Omega$ is a $p$th root of a disc.
		
		{\bf (ii)} If a cavity forms so that $S_V=S_Q \backslash \Omega$ for a smooth open set $\Omega \subset S_Q$, where $0 \in \Omega$, then $\Omega$ is a conformal image of the unit disc under a map  $\varphi: \mathbb{D} \to \Omega$ of the form $\varphi(t)=\frac{At}{(1-t\overline{\zeta_0})^{1/p}},$ with $A$ a constant and $\zeta_0 \neq 0$ .
		
		{\bf (iii)} If $z_0=0$, then $S_V$ is an annulus.
		
		{\bf (iv)} If $S_V$ is smooth and simply connected, and if $0 \in S_V$ is an interior point, then let $S_V^c:=\hat{\mathbb{C}}\backslash S_V$ be the exterior of $S_V$. Let $(S_V^c)^*$ be the image of $S_V^c$ under the conformal mapping $z \mapsto {z^{-1}}$.  Then $(S_V^c)^*$ is the image of the unit disc under a conformal map $\varphi: \mathbb{D} \to (S_V^c)^*$ of the form $\varphi(t)=t \, \big{(} \frac{P(t)}{1-t \overline{\zeta_0}} \big{)} ^{1/p}$, where $P(t)$ is a polynomial of degree at most $2$.
		
	\end{thm}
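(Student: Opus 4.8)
The plan is to assemble the four conclusions by feeding the general results of Theorem \ref{quadid} into the specific classifications of Section \ref{modified}, while tracking the exponent carefully. The preliminary claims are quick: $S_Q = \overline{D_R(0)}$ with $R = (2pC)^{-1/2p}$ is the cited radial computation (\cite{ST}, Thm.~IV.6.1), and the density formulas follow from the general fact that on the interior of an equilibrium support the measure has density $\tfrac{1}{2\pi}\Delta(\text{field})\,dA$. For $V$ one notes that on $S_V$ the point-source term $q\ln|z-z_0|^{-1}$ is harmonic (its distributional Laplacian is concentrated at $z_0$, which lies in the deleted cavity or outside $S_V$), so $\Delta V|_{S_V} = (1+q)\Delta Q$ and the density is $\tfrac{1+q}{2\pi}\Delta Q$. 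Throughout I would use $\Delta Q = 4p^2|z|^{2p-2}$ and $\partial Q = pC z^{p-1}\bar z^{p}$.

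For (i) and (ii), Theorem \ref{quadid}(i) says that the (single) cavity $\Omega$ is a quadrature domain for the weighted area measure $\Delta Q\,dA$; since $\Delta Q = 4p^2|z|^{2p-2}$ the constant is irrelevant, so this is precisely quadrature for $|z|^{2p-2}dA$ with the single node at $z_0$. In case (i), where $0 \notin \overline{\Omega}$ (the smooth boundary avoiding the origin), Theorem \ref{discroot} applies verbatim and $\Omega$ is a $p$th root of a disc. In case (ii), where $0 \in \Omega$ and the node $z_0 \neq 0$ (the case $z_0 = 0$ being (iii)), the cavity is a smooth bounded simply connected one-point quadrature domain for $|z|^{2p-2}dA = |z|^{2(p-1)}dA$ with node off the origin. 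Applying Theorem \ref{theo} with $p$ replaced by $p-1$ yields a Riemann map of the form (\ref{form}) but with exponent $1/((p-1)+1) = 1/p$, which is the asserted $\varphi(t) = At(1-t\overline{\zeta_0})^{-1/p}$ with $\zeta_0 \neq 0$. (For $p=1$ the measure is Lebesgue and I would instead invoke the classical fact that one-point area quadrature domains are discs, for which this map is the corresponding M\"obius image.)

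Part (iii) is the radial case: with $z_0 = 0$ the field $V(z) = (1+q)C|z|^{2p} + q\ln|z|^{-1}$ is rotation invariant, hence so are $\mu_V$ and $S_V$, and together with the cavity structure $S_V = S_Q \setminus \Omega$, $0 \in \Omega$, the support is forced to be a concentric annulus; equivalently the cavity is a one-point quadrature domain for $|z|^{2p-2}dA$ with node at the origin, and the origin-node classification (applied at index $p-1$) makes $\Omega$ a disc centered at $0$. Part (iv) uses Theorem \ref{quadid}(ii) with interior base point $\alpha = 0$ (valid since $0$ is interior to $S_V$). I would compute $t^{-1}\partial Q(t^{-1}) = pC\,t^{-p}\bar t^{-p} = pC|t|^{-2p}$, so $(S_V^c)^*$ is, up to the constant $pC$, a one-point quadrature domain for $|t|^{-2p}dt$ with node $z_0^{-1}$. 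Since $\infty \in S_V^c$ maps to $0$, the origin lies in $(S_V^c)^*$, while $z_0 \neq 0$ keeps the node $z_0^{-1}$ away from the origin; as $(S_V^c)^*$ is smooth, bounded, and simply connected, Theorem \ref{riemann} applies and gives a map of form (\ref{nonzero}) or (\ref{zero}), both of which are $\varphi(t) = t\big(P(t)/(1-t\overline{\zeta_0})\big)^{1/p}$ with $\deg P \leq 2$.

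The main obstacle is bookkeeping rather than any single hard estimate: each part collapses onto a theorem of Section \ref{modified}, but one must match measures correctly, the key point being that the cavity measure $\Delta Q\,dA$ equals $|z|^{2(p-1)}dA$, so the relevant classifications in (ii) and (iii) must be invoked at index $p-1$ (shifting $1/(p+1)$ to $1/p$), with the degenerate $p=1$ case handled separately. Secondary care is needed to verify the hypotheses of the cited theorems — smoothness, simple-connectedness, and the placement of the origin and node relative to each domain, in particular that the origin of $(S_V^c)^*$ arises as the image of $\infty$.
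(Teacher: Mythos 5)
Your assembly is essentially the paper's own: parts (i), (ii), and (iv) are obtained exactly as you do, by feeding Theorem \ref{quadid} into the classifications of Section \ref{modified}, and your index bookkeeping is right where it matters. The cavity carries the measure $\Delta Q\,dA \propto |z|^{2p-2}dA$, so Theorem \ref{discroot} applies as stated in (i); Theorem \ref{theo} must indeed be invoked with $p$ replaced by $p-1$ in (ii), producing the exponent $1/p$ (with the $p=1$ case falling back on the classical disc result, as you say); and in (iv) the computation $t^{-1}\partial Q(t^{-1})\,dt = pC\,|t|^{-2p}\,dt$ places $(S_V^c)^*$ in the hypotheses of Theorem \ref{riemann} at the same index $p$, with node $z_0^{-1}\neq 0$ and with $0$ (the image of $\infty$) interior to $(S_V^c)^*$. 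The smoothness and simple-connectedness caveats you flag are precisely the assumptions the paper itself emphasizes after the theorem statement, so no complaint there.

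The one genuine gap is part (iii). Its hypothesis is only $z_0=0$: you are not given the cavity structure $S_V = S_Q\setminus\Omega$ with $0\in\Omega$, nor smoothness of $\Omega$, and rotation invariance of $V$ by itself forces only that $S_V$ is a rotation-invariant compact set --- a priori a union of concentric annuli and circles, not a single annulus. Hence both of your routes for (iii), symmetry plus assumed cavity structure and the origin-node quadrature classification applied to $\Omega$, assume what is not hypothesized. The paper instead gets (iii) directly from the theory of radially symmetric fields (\cite{ST}, Section IV.6): since $rV'(r) = 2p(1+q)Cr^{2p}-q$ is increasing on $(0,\infty)$, the support is a single annulus $r_0\le |z|\le R_0$; solving $rV'(r)=1$ even shows $R_0=(2pC)^{-1/2p}=R$, so the outer boundary of $S_Q$ is preserved, while $r_0>0$ solves $rV'(r)=0$. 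Replacing your argument for (iii) with this citation closes the gap; everything else stands.
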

	
	A few remarks are in order. Note that $\bf{(iii)}$ is a consequence of (\cite{ST} Section IV.6). In {\bf (iv)}, if $0 \notin \Omega$, then use instead the mapping $z  \to (z-\alpha)^{-1},$  where $\alpha$ is an interior point $S_V$, and analyze as in the previous section. Finally, we emphasize the assumptions of smoothness and simple connectedness. In the next subsection, we show how the case {\bf (i)} can be verified by direct computation.
	
	If instead of a single point source we add several to the background field $Q(z)$, then we can draw some conclusions about what happens in case a cavity forms. For the next theorem, consider the external field $Q(z)=C|z|^{2p}$ in the plane, $C>0$, $p \in \mathbb{N}$.  Let $V(z):=(1+q)Q(z)+\sum_{j=1}^n q_j \ln|z-z_j|^{-1}$ where $q=\sum_{j=1}^n q_j$. Here $q_j>0$, $j=1,2, \dots, n$, and $|z_j|<(2pC)^{-1/2p}$, $j=1,2,\dots, n.$ Set $r_j:=\sqrt{\frac{q_j}{2C(1+q_j)}}$ and define $\Omega_j$ to be that component of $\{z \in \mathbb{C}: \; |z^p-z_j^p|<r_j \}$ which contains $z_j$. (Note that $\Omega_j$ is a $p$th root of a disc.)
	
	\begin{thm}
		\label{multipoint}
		With definitions as above, suppose that $S_V$ is of the form $S_Q\backslash \Omega$ for an open set $\Omega \subset S_Q$.
		
		{\bf (i)} $\Omega$ is a union of quadrature domains for weighted area measure $|z|^{2p-2}dA$. The quadrature nodes involved are the points $z_j$.
		
		{\bf (ii)} If for each $j=1,2, \cdots, n$ we have $r_j<|z_j|^p$ and $|z_j^p|+r_j< (2pC)^{-1/2}$, and the $\Omega_j$ are mutually disjoint, then $\Omega=\cup_{j=1}^n \Omega_j.$ 
	\end{thm}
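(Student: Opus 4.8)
The plan is to treat the two parts separately, with part (i) being essentially a specialization of the general theory and part (ii) requiring a genuine identification argument. For part (i), I would simply verify the hypotheses of Theorem \ref{quadid}(i): the field $Q(z)=C|z|^{2p}$ is smooth, admissible, and subharmonic, with $\Delta Q=4Cp^2|z|^{2p-2}\ge 0$, and we are \emph{given} that $S_V=S_Q\backslash\Omega$. Theorem \ref{quadid}(i) then says that each component of $\Omega$ is a quadrature domain for $\Delta Q\,dA$ with nodes at the $z_j$ it contains. Since $4Cp^2$ is a positive constant, being a quadrature domain for $\Delta Q\,dA$ is the same as being one for $|z|^{2p-2}dA$, which gives (i) at once.

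For part (ii) I would set $\Omega^\sharp:=\bigcup_{j=1}^n\Omega_j$ and aim to prove that $S_Q\backslash\Omega^\sharp$ is \emph{the} equilibrium support $S_V$; combined with the hypothesis $S_V=S_Q\backslash\Omega$ and uniqueness of $\mu_V$, this yields $\Omega=\Omega^\sharp$. First I would extract the geometric content of the three hypotheses: the condition $r_j<|z_j|^p$ says the disc $\{|w-z_j^p|<r_j\}$ misses the origin, so $z\mapsto z^p$ is univalent on $\Omega_j$, making $\Omega_j$ a genuine $p$th root of a disc with $0\notin\overline{\Omega_j}$; the condition $|z_j^p|+r_j<(2pC)^{-1/2}=R^p$ forces $\Omega_j\subset\text{int}(S_Q)$; and mutual disjointness makes $\Omega^\sharp$ a disjoint union of such cavities lying interior to $S_Q$ and avoiding the origin. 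The candidate measure is $\mu^\sharp:=\tfrac{(1+q)\Delta Q}{2\pi}\,dA\big|_{S_Q\backslash\Omega^\sharp}$, and I would check Frostman's two characterizing conditions for $V$.

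The computational heart is the potential condition, which after differentiation in $z$ reduces to evaluating the Cauchy transform of each cavity. Changing variables by $\zeta=w^p$ sends $\Omega_j$ conformally onto $\{|\zeta-z_j^p|<r_j\}$ and converts $\partial\bar\partial Q\,dA_w=Cp^2|w|^{2p-2}dA_w$ into $C\,dA_\zeta$, so that
\[\int_{\Omega_j}\frac{\partial\bar\partial Q(w)}{w-z}\,dA_w=C\int_{|\zeta-z_j^p|<r_j}\frac{dA_\zeta}{\zeta^{1/p}-z}.\]
For $z\notin\overline{\Omega_j}$ the integrand is analytic in $\zeta$ on a neighborhood of the closed disc (the branch $\zeta^{1/p}$ never meets $z$, since $z\notin\Omega_j$), so the area mean value property collapses the integral to the single Cauchy kernel $C\pi r_j^2/(z_j-z)$. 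Summing over the disjoint $\Omega_j$ produces exactly the combination $\propto\sum_j q_j/(z_j-z)$ demanded by subtracting the two Frostman identities; matching coefficients (equivalently, matching the swept mass $\int_{\Omega_j}\Delta Q\,dA$ near $z_j$) is precisely what pins down the radius. Because the resulting weighted potential $U^{\mu^\sharp}+V$ then has vanishing $z$-derivative on the open set $S_Q\backslash\overline{\Omega^\sharp}$ and is real, it is locally constant there, and in fact globally constant since $S_Q\backslash\Omega^\sharp$ (a disc with small interior holes) is connected.

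The step I expect to be the main obstacle is the remaining half of Frostman's characterization: the inequality $U^{\mu^\sharp}+V\ge C_V$ \emph{off} the support, and the promotion of "$z$-derivative vanishes" to a genuine single constant of the weighted potential on the whole support. The differentiated identity only controls the gradient, so this variational inequality is the truly analytic point; I would handle it by invoking the uniqueness of partial balayage / the obstacle problem underlying the cavity (cf. \cite{GR, Roos}), which guarantees that at most one open $\Omega\subset\text{int}(S_Q)$ can realize the swept configuration, and by checking the inequality cavity-by-cavity. The disjointness and the bound $|z_j^p|+r_j<R^p$ are exactly what make this localization legitimate, ensuring the cavities neither interact nor reach $\partial S_Q$.
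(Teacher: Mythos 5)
Your part (i) is correct and is exactly the paper's proof: the hypotheses of Theorem \ref{quadid}(i) hold for $Q(z)=C|z|^{2p}$, and since $\Delta Q=4Cp^{2}|z|^{2p-2}$ is a constant positive multiple of $|z|^{2p-2}$, the two weighted quadrature properties coincide. For part (ii) your overall plan is also the paper's intended one (the paper says (ii) ``can be verified by computing from Frostman's theorem'' as in the proof of Theorem \ref{cavity}): form the candidate measure $\mu^{\sharp}=\frac{(1+q)\Delta Q}{2\pi}\,dA\big|_{S_Q\setminus\Omega^{\sharp}}$, verify Frostman's characterization, and conclude by uniqueness; and your change of variables $\zeta=w^{p}$ together with the mean value property is the right mechanism. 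The genuine gap is the step you yourself flag as the main obstacle. Differentiation yields only the gradient condition, and you defer the inequality off the support to ``uniqueness of partial balayage,'' which is never carried out and, as stated, begs the question: to know that both the equilibrium cavity $\Omega$ and your $\bigcup_j\Omega_j$ are the saturation set of the same obstacle problem, you must verify precisely the kind of potential inequality you are trying to avoid (the analytic quadrature identity of Theorem \ref{quadid}(i) alone does not determine an open set; the Sakai--Gustafsson--Roos uniqueness theory runs through \emph{subharmonic} test functions, so you would need the subharmonic quadrature inequality for each $\Omega_j$ plus the identification of the weighted equilibrium problem with an obstacle problem, none of which appears in the proposal). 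The paper avoids all of this by never differentiating: it computes the logarithmic potential of each cavity directly, via the splitting (in the notation of Theorem \ref{cavity})
\[
\ln\frac{1}{|\psi(t)-z|}=\ln\frac{|t-\varphi^{p}(z)|}{|\psi(t)-z|}+\ln\frac{1}{|t-\varphi^{p}(z)|},
\]
in which the first term is harmonic in $t$ even when $z$ lies \emph{inside} the cavity (the singularities cancel), so the mean value property applies, and the second is the classical potential of a uniformly charged disc. The resulting perturbation $F_{2}$ vanishes identically off the cavity and equals $q\bigl(-\ln s+\tfrac{1}{2}(s^{2}-1)\bigr)$ inside it, where $s:=|\varphi^{p}(z)-\varphi^{p}(z_{0})|/r\in[0,1)$, and this is positive by elementary calculus. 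One computation therefore delivers both Frostman conditions simultaneously, and disjointness of the cavities lets the local perturbations be summed.

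There is also a quantitative point that your ``matching coefficients pins down the radius'' step would have exposed had you executed it: the matching (equivalently, the mass balance $\frac{1+q}{2\pi}\int_{\Omega_j}\Delta Q\,dA=q_j$) forces $2C(1+q)r_j^{2}=q_j$, i.e.\ $r_j=\sqrt{q_j/\bigl(2C(1+q)\bigr)}$ with $q=\sum_j q_j$ the \emph{total} intensity. This agrees with Theorem \ref{cavity} (the case $n=1$) and with the paper's roots-of-unity example, where $T(q)=\sqrt{q/(2C(1+pq))}$, but not with the literal definition $r_j=\sqrt{q_j/(2C(1+q_j))}$ preceding the theorem, which is evidently a typo once $n\geq 2$: with that $r_j$ the Cauchy kernels produced by your mean-value computation cannot match the required combination $\sum_j q_j/(z_j-z)$ unless all the $q_j$ are equal, so the identification $\Omega=\bigcup_j\Omega_j$ should be asserted only for the corrected radii.
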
 
	
	Part (i) is immediate from part (i) of Theorem \ref{quadid}. Part (ii) says that for sufficiently weak point sources in the interior of $S_Q$, the cavity is a union of $p$th roots of discs around the point sources. This can be verified by computing from Frostman's theorem similar to what will be shown in the next subsection.
	
	Khavinson and Lundberg in \cite{KhavinsonLundberg}, Exercise 22.3a, point out that Cassini ovals $\mathcal{O}_T:=\{z \in \mathbb{C} : \; |z^2-1|<T\}$, $T>1$, are two-node quadrature domains for weighted area measure $|z|^2dA$. One can show that such $\mathcal{O}_T$ are the cavities that occur in the support $S_V$ of the equilibrium measure in the presence of $V(z):=(1+2q)C|z|^4+q\ln|z-1|^{-1}+q\ln|z+1|^{-1}$, when the constant $C$ is small enough and $q$ is large enough. In slightly more generality, we have the following.
	
	{\bf Example.} Consider the external field $Q(z)=C|z|^{2p}$ in the plane, where $C<1/2p$. The corresponding equilibrium measure support $S_Q$ is the closed disc centered at the origin of radius $(2pC)^{-1/2p}.$ Let $\omega=\exp(2\pi i/p)$, and place point sources of equal intensity $q>0$ at the $p$th roots of unity. So consider the external field $V(z)=(1+pq)Q(z)+q\sum_{j=0}^{p-1} \ln |z-\omega^j|^{-1}.$ Denote $T:=T(q)=\sqrt{\frac{q}{2C(1+pq)}}$ and $\mathcal{O}_T:=\{z \in \mathbb{C}: \; |z^p-1|<T \}$. Then for all $q>0$ such that $1+T(q)<(2pC)^{-1/2}$, the equilibrium support $S_V$ is $S_Q \backslash \mathcal{O}_T$.  
	
	Notice that for $0<q<2C/(1-2pC),$  $\mathcal{O}_T$ is a union of $p$th roots of discs around the $p$th roots of unity. When $q=2C/(1-2pC)$, the closure of $\mathcal{O}_T$ becomes connected as the roots of discs meet at the origin.  For all values $q>2C/(1-2pC)$ such that $T(q)<(2pC)^{-1/2}-1$, the set $\mathcal{O}_T$ is a lemniscate which generalizes the Cassini ovals pointed out in \cite{KhavinsonLundberg} Exercise 22.3a. If $q$ is large enough so that $T(q)>(2pC)^{-1/2}-1$, then $S_V$ is no longer determined by a cavity (since $\mathcal{O}_T$ is no longer compactly contained in $S_Q$). Figure \ref{fig4} demonstrates the cases of $p=2,3$.
	
	This example is related to the fact that polynomial lemniscates are quadrature domains with respect to (unweighted) equilibrium measure on the boundary of the lemniscate. One interpretation of the cavity above is that for the lemniscate $\mathcal{O}_T$, the unweighted equilibrium measure on the boundary matches the balayage of the weighted area measure $\frac{ 1+q}{2\pi} \Delta QdA \big{|}_{\mathcal{O}_T}$ onto the boundary $\partial \mathcal{O}_T$. (See \cite{ST}, \cite{KhavinsonLundberg}.)
	
	
	
	
	
	\begin{figure}%
		\centering
		\subfloat[\centering $q$=0.095]{{\includegraphics[width=3.5cm]{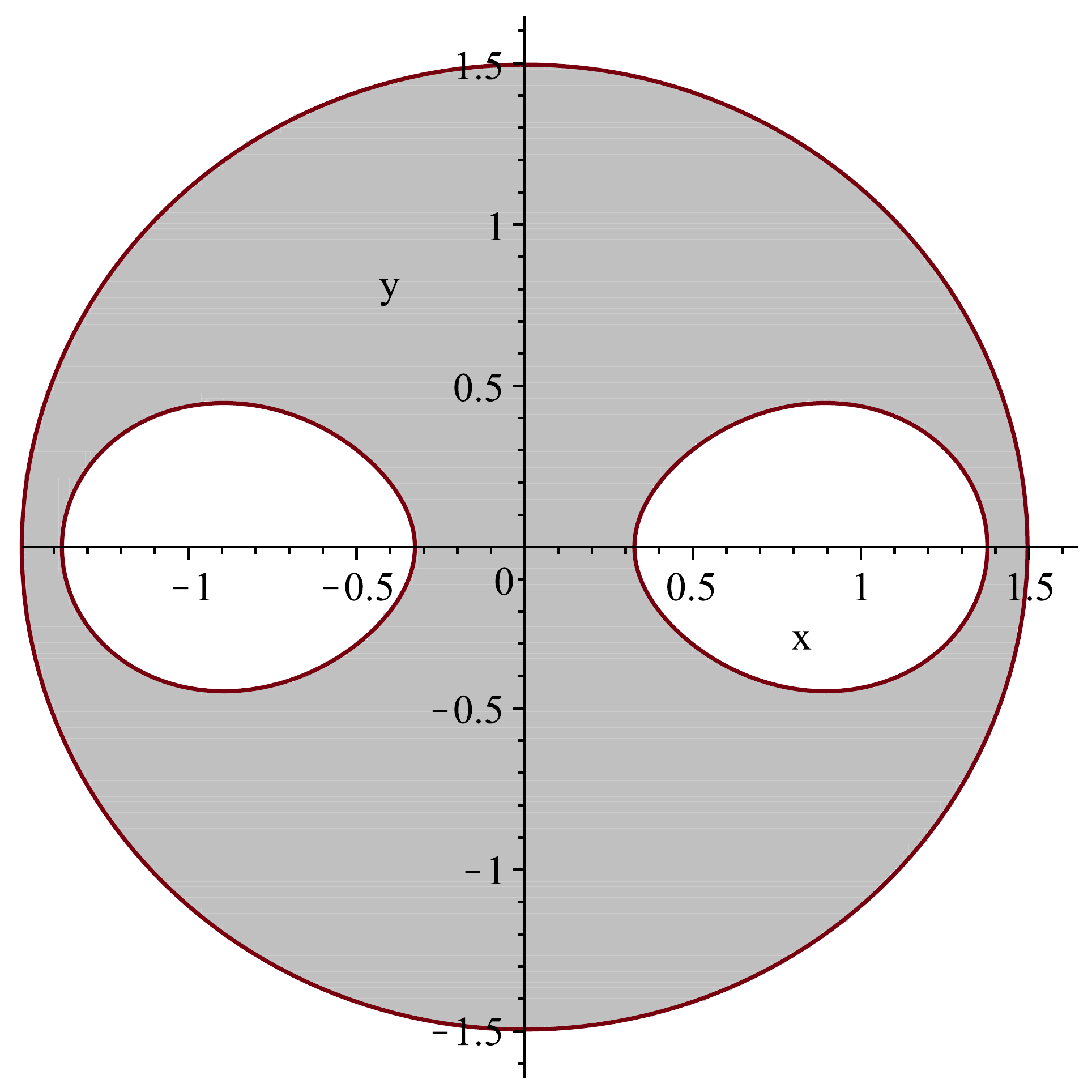} }}%
		\qquad
		\subfloat[\centering $q$=0.125]{{\includegraphics[width=3.5cm]{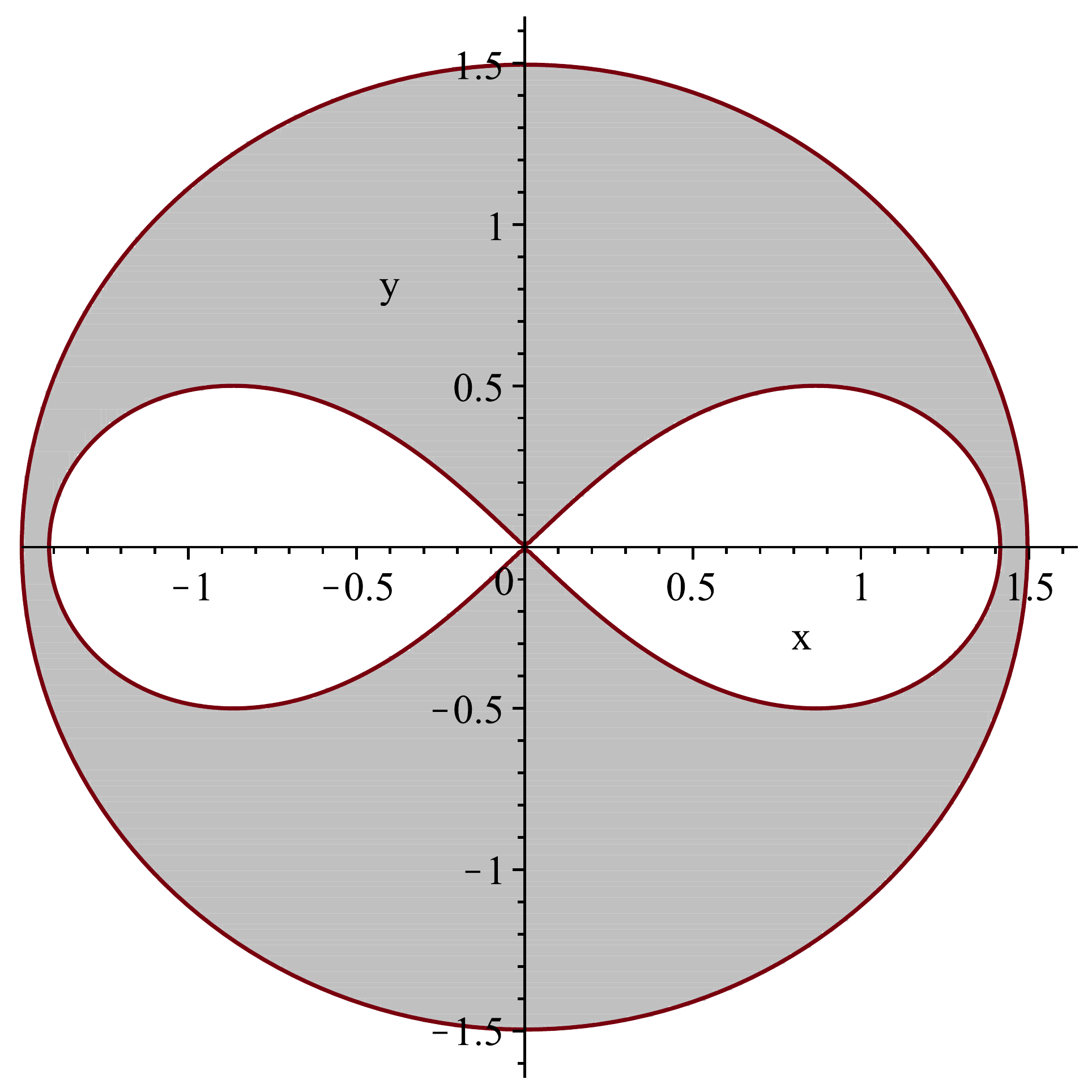} }}%
		\qquad
		\subfloat[\centering $q$=0.18]{{\includegraphics[width=3.5cm]{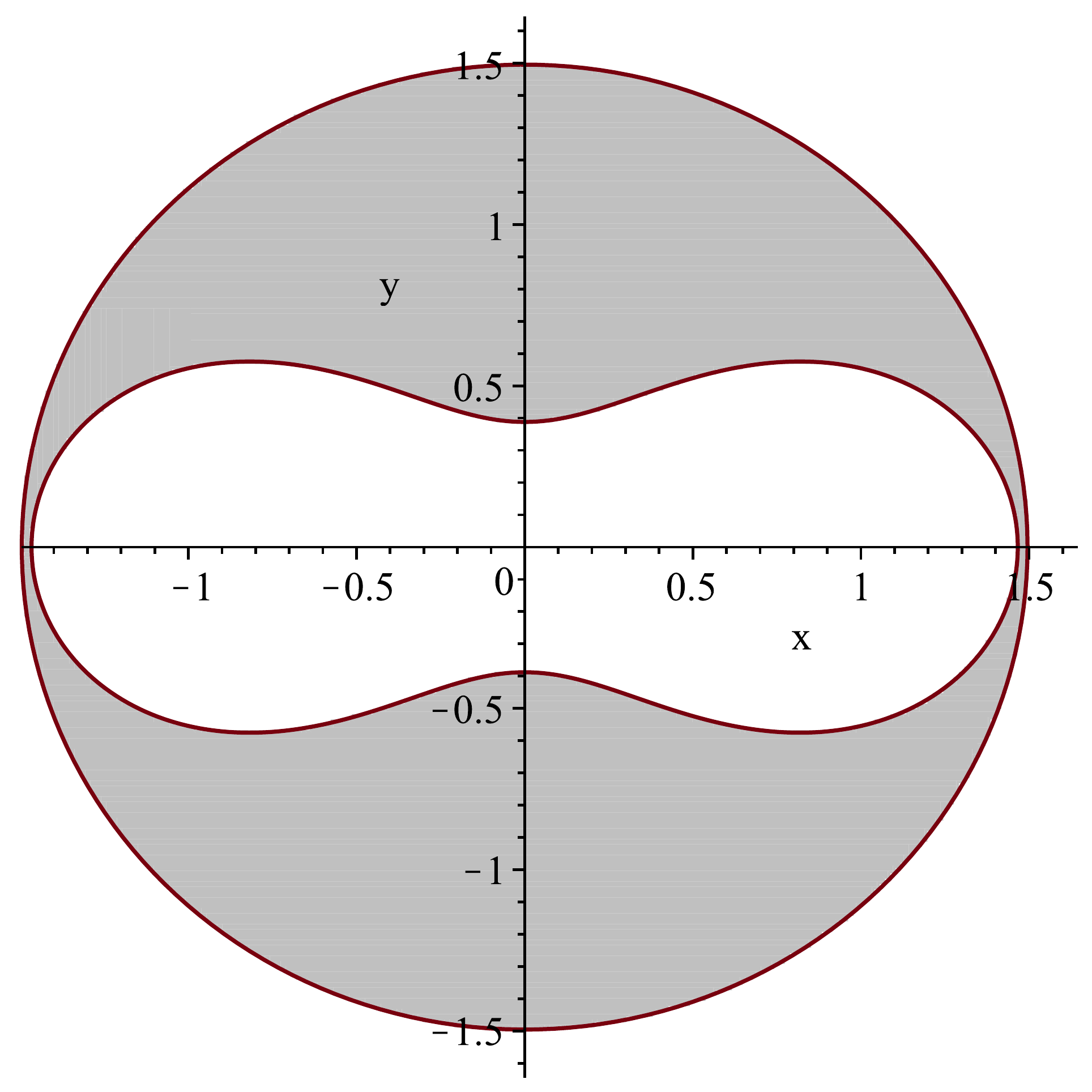} }}%
		\vspace{1mm}
		
		\subfloat[\centering $q$=0.0415]{{\includegraphics[width=3.5cm]{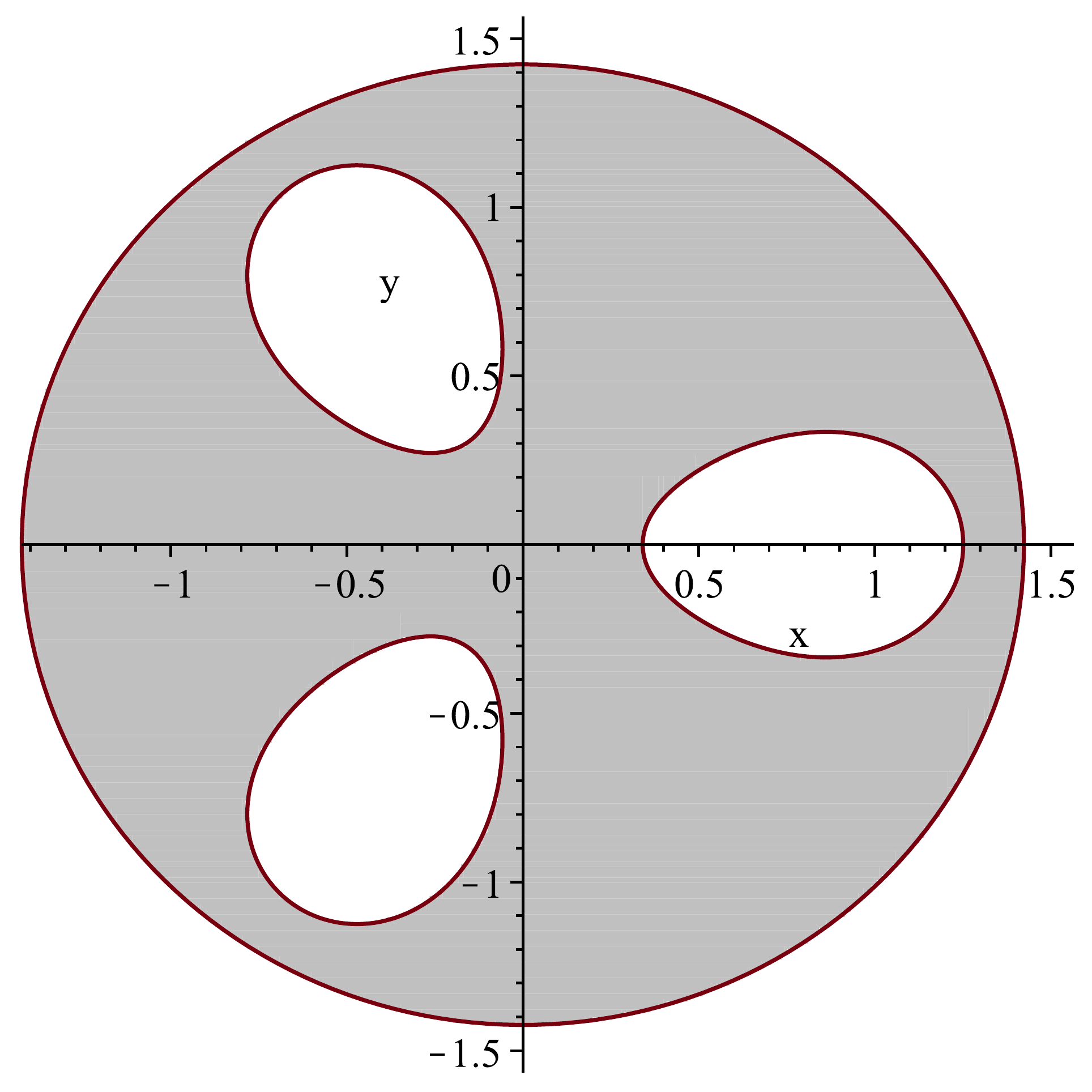} }}%
		\qquad
		\subfloat[\centering $q$=0.04545]{{\includegraphics[width=3.5cm]{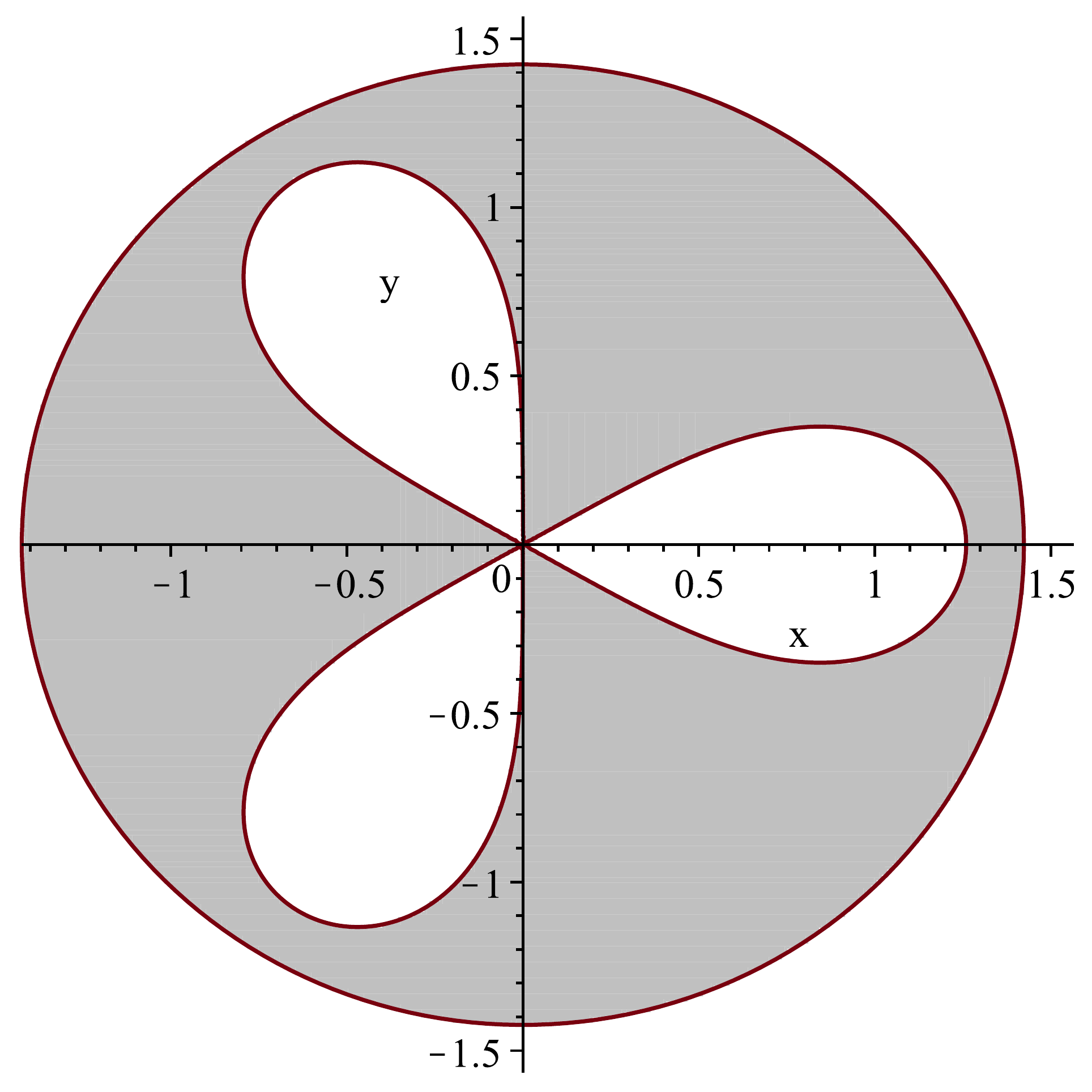} }}%
		\qquad
		\subfloat[\centering $q$=0.0475]{{\includegraphics[width=3.5cm]{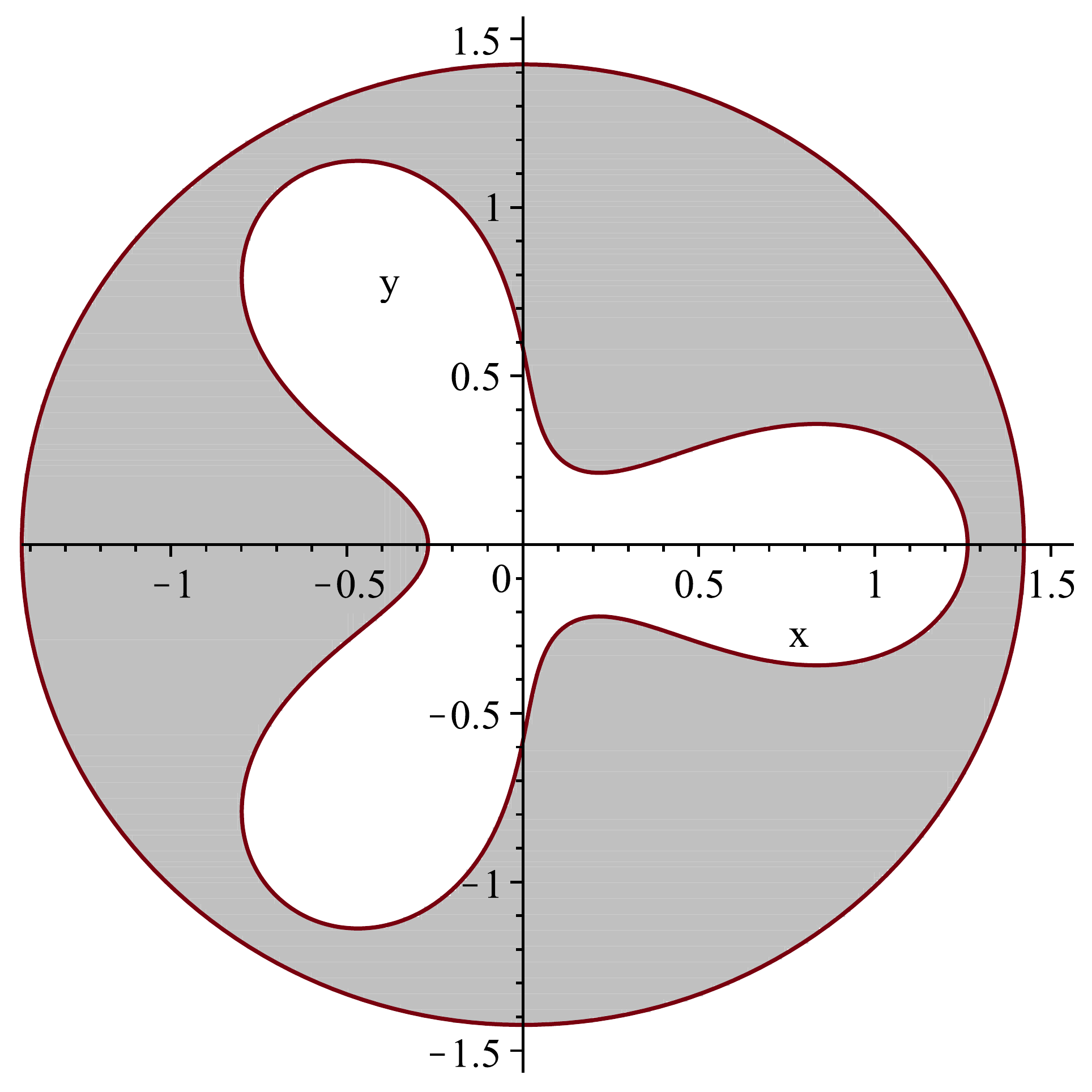} }}%
		
		\caption{ \\{\bf Above}: $S_V$ for $V(z)=\frac{1+2q}{50}|z|^{4}+q\ln|z^2-1|^{-1}$ with given $q$.\\{\bf Below}: $S_V$ for $V(z)=\frac{1+3q}{20}|z|^6+q\ln|z^3-1|^{-1}$ with given $q$.}   %
		\label{fig4}%
	\end{figure}

	\subsection{Cavity by direct computation}
	Although we have assumed smoothness above, in the case of a cavity our result can be verified by directly computing the weighted potential and using Frostman's theorem.  In fact this can be done in a more general setting which we now describe.
	
	Suppose that $Q(z)$ is an admissible external field, and that $\varphi(z)$ is a complex analytic function such that $Q(z)=C|\varphi(z)|^{2p}$ for $z$ near $z_0$, where $z_0$ is an interior point of the support $S_Q$ of the equilibrium measure $\mu_Q$. Assume furthermore that $0 \neq d \varphi^p/ dz |_{z=z_0}$. (Here $\varphi^p(z)$ designates the $p$th power, and not $p$ iterations of the function.)
	
	Note that $\varphi^p$ is univalent in a neighborhood of $z_0$, and so has a local inverse $\psi$ defined in a neighborhood of $\varphi^p(z_0)$. Let $r:=\sqrt{\frac{q}{2C(1+q)}}$, and let $q$ be small enough so that $\psi$ is defined on the disc $D_r(\phi^p(z_0))$ of radius $r$ centered at $\varphi^p(z_0)$, and so that $\Omega:=\psi(D_r(\varphi^p(z_0)))$ is compactly contained in $S_Q$.  We contend that for small $q>0$, the equilibrium measure $\mu_V$ in the plane in the presence of the external field $V(z)=(1+q)Q(z)+q\ln|z-z_0|^{-1}$ has support $S_V:=S_Q \backslash \Omega$.
	
	To proceed, define a weighted potential $F(z)$ as in Frostman's Theorem:
	
	\[F(z):= \frac{1+q}{2\pi}\int_{S_V} \Delta Q(w) \ln \frac{1}{|w-z|}dA_w + (1+q)Q(z)+ q \ln \frac{1}{|z-z_0|}. \]
	
	Write $F(z)=F_1(z)+F_2(z)$ where $F_1,F_2$ are defined as
	
	\[F_1(z):=\frac{1+q}{2\pi} \int_{S_Q} \Delta Q(w) \ln \frac{1}{|w-z|}dA_w + (1+q)Q(z), \]
	
	\[F_2(z):=q\ln \frac{1}{|z-z_0|} - \frac{(1+q)C}{2\pi} \int_\Omega 4p^2 |\varphi^{p-1}(w)\varphi'(w)|^2\ln \frac{1}{|w-z|}dA_w \]
	
	By Frostman's theorem, $F_1(z)$ is a constant $C_1$ for $z\in S_V$ and $F_1(z) \geq C_1$ elsewhere (quasi-everywhere). As for $F_2$, we need to compute the potential occurring from $\Omega$.  Observe that $p^2|\varphi^{p-1}(w)\varphi'(w)|^2$ is the Jacobian determinant of the map $w \mapsto \varphi^p(w)$. By changing variables the integral over $\Omega$ occurring in $F_2$ is
	\[\frac{4C(1+q)}{2\pi} \int_{D_r(\varphi^p(z_0))} \ln\frac{1}{|\psi(t)-z|}dA_t. \]
	When $z$ is outside $\Omega$, this can be evaluated by the harmonic mean value property, yielding $q\ln|z-z_0|^{-1}$. When $z$ is in the closure of $\Omega$, we rewrite the integral as
	\[\frac{4C(1+q)}{2\pi} \big{(} \int_{D_r(\varphi^p(z_0))} \ln \frac{|t-\varphi^p(z)|}{|\psi(t)-z|}dA_t + \int_{D_r(\varphi^p(z_0))} \ln \frac{1}{|t-\varphi^p(z)|}dA_t\big{)} . \]
	The first term can be calculated by the mean value theorem. The second term represents the potential formed by a disc of uniform charge, which is well known. This gives
	\[q \ln \frac{|\varphi^p(z_0)-\varphi^p(z)|}{|z_0-z|}+q(\frac{1}{2}-\ln(r))-\frac{q}{2} \frac{|\varphi^p(z_0)-\varphi^p(z)|^2}{r^2}. \]
	
	Hence
	\[F_2(z)= \begin{cases} 0, \qquad \qquad \qquad \qquad \qquad \qquad \qquad \qquad \; \; \,  z \notin \Omega \\
		q \ln \frac{r}{|\varphi^p(z_0)-\varphi^p(z)|}+\frac{q}{2}(-1+\frac{|\varphi^p(z_0)-\varphi^p(z)|^2}{r^2}), \; z \in \Omega \end{cases}. \]
	
	From here it can be verified by calculus that $F_2(z)>0$ for $z \in \Omega$. That means $F(z)$ is itself equal to the constant $C_1$ on $S_V,$ and $F(z)\geq C_1$ for all $z \notin S_V$. We conclude by Frostman's theorem that $\mu_V=\frac{1+q}{2\pi}\Delta Q dA \big{|}_{S_Q \backslash \Omega},$ and indeed $S_V=S_Q \backslash \Omega$.
	
	\begin{thm}
		\label{cavity}
		Let $Q(z)$ be an admissible external field and let the support of the equilibrium measure in the plane in the presence of $Q(z)$ be $S_Q$. Let $z_0$ be an interior point of $S_Q$ and assume that in a neighborhood of $z_0$, $Q(z)=C|\varphi(z)|^{2p}$ where $\varphi$ is a complex analytic function such that $\varphi(z_0)$ and $\varphi'(z_0)$ are nonzero, $p$ is a natural number, and $C>0$. Setting $r=\sqrt{q/2C(1+q)},$ we have that for all $q>0$ sufficiently small, the support $S_V$ of the equilibrium measure in the plane in the presence of the external field $V(z)=(1+q)Q(z)+q \ln |z-z_0|^{-1}$ is $S_V=S_Q \backslash \Omega$, where $\Omega:=\psi(D_r(\varphi^p(z_0)))$, with $D_r(\varphi^p(z_0))$ being the disc of radius $r$ centered at $\varphi^p(z_0)$, and $\psi$ being the (local) inverse of $\varphi^p$.
	\end{thm}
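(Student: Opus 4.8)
The plan is to exhibit an explicit candidate for $\mu_V$ and to verify the Frostman conditions, which by uniqueness of the equilibrium measure pin down both $\mu_V$ and its support $S_V$. The candidate is the measure obtained by spreading charge $1+q$ over $S_Q$ with density $\frac{1+q}{2\pi}\Delta Q$ and then deleting the piece sitting over $\Omega=\psi(D_r(\varphi^p(z_0)))$; that is, $\nu:=\frac{1+q}{2\pi}\Delta Q\,dA\big|_{S_Q\setminus\Omega}$. First I would check that $\nu$ is a probability measure. Since $\mu_Q=\frac1{2\pi}\Delta Q\,dA|_{S_Q}$ has unit mass, the mass of $\nu$ is $(1+q)-\frac{1+q}{2\pi}\int_\Omega\Delta Q\,dA$, and the computation $\Delta Q=4Cp^2|\varphi^{p-1}\varphi'|^2$ together with $2C(1+q)r^2=q$ shows the deleted mass is exactly $q$, so $\nu$ has mass $1$.

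Next I would write the weighted potential $F(z)$ of $\nu$ as $F=F_1+F_2$, isolating the part already controlled by $Q$. The term $F_1$ is $(1+q)$ times the weighted $Q$-potential of $\mu_Q$; by Frostman applied to $\mu_Q$ it equals a constant $C_1$ on $S_Q$ and is $\ge C_1$ off $S_Q$. The remaining term $F_2(z)=q\ln\frac1{|z-z_0|}-\frac{1+q}{2\pi}\int_\Omega\Delta Q(w)\ln\frac1{|w-z|}\,dA_w$ carries all the new information and is where the real work lies.

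The crucial device is the change of variables $t=\varphi^p(w)$. Because $\varphi^p$ is univalent near $z_0$ (as $\varphi(z_0),\varphi'(z_0)\ne0$) its Jacobian is $|(\varphi^p)'|^2=p^2|\varphi^{p-1}\varphi'|^2$, which is exactly the weight hidden in $\Delta Q$; hence the weighted integral over $\Omega$ becomes the unweighted logarithmic potential of the uniform disc $D_r(\varphi^p(z_0))$ evaluated along $t\mapsto\psi(t)$. For $z\notin\Omega$ the integrand $t\mapsto\ln\frac1{|\psi(t)-z|}$ is harmonic on $D_r$, so the mean value property collapses the integral to its center value $\ln\frac1{|z_0-z|}$ and $F_2$ vanishes identically; thus $F\equiv C_1$ on $S_V$ and $F\ge C_1$ on the outer complement $S_Q^c$. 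For $z\in\Omega$ I would split off the singularity by writing $\ln\frac1{|\psi(t)-z|}=\ln\frac{|t-\varphi^p(z)|}{|\psi(t)-z|}+\ln\frac1{|t-\varphi^p(z)|}$: the first summand is again harmonic (the ratio is analytic and nonvanishing by univalence) and yields to the mean value property, while the second is the classical interior potential of a uniformly charged disc, $\pi r^2\ln\frac1r+\frac\pi2(r^2-\rho^2)$ with $\rho=|\varphi^p(z)-\varphi^p(z_0)|$. Assembling these and using $2C(1+q)r^2=q$ produces the closed form $F_2(z)=q\ln\frac r\rho+\frac q2(\frac{\rho^2}{r^2}-1)$.

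The main obstacle, really the only substantive inequality, is then to show $F_2>0$ throughout $\Omega$, i.e. for $0\le\rho<r$. Writing $u=\rho/r\in[0,1)$, this reduces to $-\ln u+\frac12(u^2-1)>0$, which I would verify by noting the left side vanishes at $u=1$ and has derivative $(u^2-1)/u<0$ on $(0,1)$, so it is strictly decreasing to $0$ and hence positive on $[0,1)$. Combined with $F_1=C_1$ on the interior $\Omega\subset S_Q$, this gives $F\ge C_1$ inside the cavity as well. With the remaining geometric bookkeeping, choosing $q$ small enough that $r$ is small so that $\psi$ is defined on $D_r(\varphi^p(z_0))$ and $\Omega\Subset S_Q$, Frostman's characterization then forces $\mu_V=\nu$ and $S_V=S_Q\setminus\Omega$.
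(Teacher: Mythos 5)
Your proposal is correct and follows essentially the same route as the paper's own proof: the same decomposition $F=F_1+F_2$, the change of variables $t=\varphi^p(w)$ turning the weighted cavity potential into that of a uniformly charged disc, the mean value property for the harmonic ratio $\ln\frac{|t-\varphi^p(z)|}{|\psi(t)-z|}$, the identical closed form for $F_2$ on $\Omega$, and the Frostman characterization to conclude $\mu_V=\nu$. The only difference is that you spell out two steps the paper leaves implicit, namely the unit-mass check for $\nu$ and the calculus verification that $-\ln u+\tfrac12(u^2-1)>0$ on $[0,1)$, which are worthwhile but not a different method.
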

	{\textbf{Remark.}} In Theorem \ref{cavity} `sufficiently small' means small enough such that $\psi$ exists on $D_r(\varphi^p(z_0))$ and $\psi(D_r(\varphi^p(z_0))$ is compactly contained in $S_Q$.

	\section*{Acknowledgements}
	We thank Erik Lundberg for helpful remarks on Cassini ovals that allowed us to draw a connection to \cite{KhavinsonLundberg}.
	
	\bibliographystyle{plain}
	\bibliography{onepoint.bib}

\end{document}